\newtheorem{Theorem}{Theorem}[section]
\newtheorem{Lemma}{Lemma}[section]
\newtheorem{Proposition}{Proposition}[section]
\newtheorem{Definition}{Definition}[section]
\numberwithin{equation}{section}
\def\XXint#1#2#3{{\setbox0=\hbox{$#1{#2#3}{\int}$ }
\vcenter{\hbox{$#2#3$ }}\kern-.6\wd0}}
\begin{document}
\title[ Global well-posedness of  MHD]{\bf Global well-posedness of  magnetohydrodynamic equations}
\author{Chengfei Ai}
\address{School of Mathematical Sciences, Xiamen University, Xiamen, 361005, China.}
\email[C.F. Ai]{aicf5206@163.com}
\author{Zhong Tan}
\address{School of Mathematical Sciences, Xiamen University, Xiamen, 361005, China.}
\email[Z. Tan]{ztan85@163.com}
\author{Jianfeng Zhou}
\address{School of Mathematical Sciences, Peking University, Beijing 100871, China}
\email[J. Zhou]{jianfengzhou@pku.edu.cn}
\thanks{Corresponding author:Jianfeng Zhou,\ jianfengzhou@pku.edu.cn}
\begin{abstract}
We study the global well-posedness of  magnetohydrodynamic (MHD) equations. The hydrodynamic system consists of the Navier-Stokes equations for the
fluid velocity coupled with a reduced from of the Maxwell equations for the magnetic field. The fluid velocity is assumed to satisfy a no-slip boundary
condition, while the magnetic field is subject to a time-dependent Dirichlet boundary condition. We first establish the global existence of weak and
strong solutions to (\ref{1.1})-(\ref{1.01}). Then we derive the existence of a uniform attractor for  (\ref{1.1})-(\ref{1.01}).
\bigbreak
\noindent
{\bf \normalsize Keywords }  { Magnetohydrodynamic\,;  well-posedness\,; weak solutions\,; strong solutions\,; uniform attractor.}
\bigbreak
\end{abstract}
\subjclass[2010]{35Q35; 35B65; 76W05; 76N10.}
\maketitle
\section{Introduction}
We consider the following magnetohydrodynamic (MHD) equations in a bounded smooth domain $\Omega\subset\mathbb{R}^n$ ($n=2,3$):
\begin{equation}\label{1.1}
\begin{cases}
\partial_tu-\frac{1}{Re}\Delta u+u\cdot\nabla u-S(\nabla\times b)\times b+\nabla \tilde{p}=0&\text{in} \quad Q_T,\\
\partial_tb-\nabla\times(u\times b)+\frac{1}{Rm}\nabla\times(\nabla\times b)=0&\text{in} \quad Q_T,\\
{\rm div}~u=0, {\rm div}~ b=0 &\text{in} \quad Q_T,
\end{cases}
\end{equation}
subject to the initial-boundary conditions
\begin{align}
&u(x,0)=u_0(x),\ b(x,0)=b_0(x) \quad \text{in} ~\Omega,\label{1.2}\\
&u(x,t)=0,\ b(x,t)=h(x,t) \quad \text{on} ~ \Gamma_T,\label{1.02}
\end{align}
where $u_0$, $b_0$ satisfy the compatibility conditions:
\begin{equation}\label{1.01}
(u_0(x),b_0(x))|_{\Gamma}=(0,h)|_{t=0},\quad \textrm{div}~u_0=\textrm{div}~b_0=0.
\end{equation}
Here $\Gamma=\partial\Omega$, $0<T<\infty$, $Q_T:=\Omega\times [0,T]$, $\Gamma_T:=\Gamma\times[0,T]$. $Re>0$ is the Reynolds number, $Rm>0$
is the magnetic Reynolds number and $S=M^2/(ReRm)$ with $M$ be the Hartman number.  Furthermore, $u:Q_T\longrightarrow \mathbb{R}^3$,
$b:Q_T\longrightarrow \mathbb{R}^3$, $\tilde{p}=\tilde{p}(x,t)\in\mathbb{R}$ denote the
velocity of the fluid, the magnetic field and the pressure, respectively, while $u_{0}:\Omega\longrightarrow \mathbb{R}^3$,
$b_{0}:\Omega\longrightarrow \mathbb{R}^3$, $h:\Gamma_T\longrightarrow \mathbb{R}^3$ denote the given initial-boundary data with
${\rm div}~u=0={\rm div}~b$. When $n=2$, $\nabla \tilde{p}=(\partial_1,\partial _2,0)\tilde{p}$ and
$\nabla\times b=(\partial_2b_3,-\partial_1b_3,\partial_1b_2-\partial_2b_1)$.

The main purpose of this paper is  to investigate the well-posedness of (\ref{1.1})-(\ref{1.01}). We first review some previous works are related to MHD
equations. If $b=0$, then  (\ref{1.1}) reduces to the incompressible Navier-Stokes (NS) equations
\begin{equation*}
\begin{cases}
u_t-\frac{1}{Re}\Delta u+u\cdot\nabla u+\nabla \tilde{p}=0&\text{in} \quad Q_T,\\
\textrm{div}~u=0             &\text{in} \quad Q_T,\\
\end{cases}
\end{equation*}
There is a huge literature on the mathematical theory of the NS equations. Leray \cite{zhang24} first introduced the concept of weak solution and obtained
the existence of global weak solutions with $u_0\in L^2(\mathbb{R}^N)$ $(N\geq2)$ (see also \cite{zhang18}). Fujita et al. \cite{zhang17} derived the
well-posedness of the Cauchy problem with $u_0\in H^s(\mathbb{R}^N)$, ($N\geq\frac N2-1$) and $N\geq2$. Furthermore, there are many classical books,  for
example, Temam \cite{bo30}, Constantin--Foias \cite{re88} and  Lions \cite{re96}. References on the mild solutions and self-similar solutions in
$\mathbb{R}^3$ are the books by Cannone \cite{re95} and Meyer \cite{re99}. In particular, Jia and \v{S}ver\'{a}k \cite{jia} proved the classical Cauchy
problem for with $(-1)$-homogeneous initial data has a global scale-invariant solution which is smooth for positive times. For more details,
we  refer the reader to \cite{yin2,yin3,zhang6,zhang7,zhang15,zhang20,zhang25,zhang41} and the reference therein.

For the MHD system, the situation is more complicated because of the coupling effect between $u$ and $b$, and it has been the subject of many studies
by physicists and mathematicians due to its physicial importance, rich phenomena and mathematical challenges. The system (\ref{1.1}) was studied by Lions
et al. \cite{huang3}, the authors constructed a global weak solution and local strong solution to the initial boundary value problem. Furthermore, the
authors  also proved the existence of global strong solution for the small initial data. However, for the case of
large initial data, whether this unique local solution can exists globally is still a challenging open problem. Later, Temam and Sermange \cite{huang13}
(see also \cite{hhw, hxjde}) proved the regularity of weak solution $(u,b)\in L^{\infty}([0,T];H^1(\mathbb{R}^3))$. In addition, Kozono
\cite{zhang21} proved the existence of the classical solutions to (\ref{1.1}) in a bounded domain $\Omega\subset\mathbb{R}^3$. For suitable weak
solutions, He and Xin \cite{hx} (cf. \cite{twz}) obtained various partial regularity results. With mixed partial dissipation and additional magnetic
diffusion in $\mathbb{R}^2$, Wu et al. \cite{yin9} proved that the MHD system is globally well-posed for any data in $H^2(\mathbb{R}^2)$. For more
details, one can refer to \cite{zhang1,zhang2,yin22,huw,zhang31,zhang32,zhang33,swz,twz1,jiu,yin} the reference therein.

Without loss of generality, throughout the paper, we simply set $Re=Rm=S=1$, because the values of those coefficients do not play a role in the subsequent
analysis. Now, we define $p:=\tilde{p}+\frac12\nabla|b|^2$, and note that
\begin{align*}
&(\nabla \times b)\times b=b\cdot \nabla b-\frac{1}{2}\nabla(|b|^2),\\
&\nabla \times \nabla \times b=\nabla \textrm{div}~b-\Delta b,\\
&\nabla \times (u\times b)=b\cdot \nabla u-u\cdot \nabla b+u~\textrm{div}~b-b~\textrm{div}~u.
\end{align*}
Then the system (\ref{1.1}) can be rewritten as
\begin{equation}\label{1.3}
\begin{cases}
\partial_tu-\Delta u+u\cdot\nabla u-b\cdot\nabla  b+\nabla p=0&\text{in} \quad Q_T,\\
\partial_tb-\Delta b+u\cdot\nabla b-b\cdot\nabla u=0&\text{in} \quad Q_T,\\
\textrm {div}~u=0, \textrm {div}~ b=0 &\text{in} \quad Q_T,
\end{cases}
\end{equation}
When considering the technically more challenging case of time-dependent Dirichlet boundary data $h:\Gamma_T\longrightarrow\mathbb{R}^3$ for $b$, this
turns out to be a challenging task, since the boundary data $h$ will lead to several new difficulties, e.g., one can not obtain the energy estimates
directly.  In order to avoid this flaw, some lifting functions will be introduced (see Section \ref{se2}). The main purpose of this paper is divided into
several points:
\begin{enumerate}
  \item  We prove the global existence of weak solutions to (\ref{1.2})-(\ref{1.3}) for $n=2,3$, and strong solutions for $n=2$,
  instead of using the contraction mapping principle in \cite{w4}, here, we  employ the semi-Galerkin approximation method (see Section \ref{se3})
  to establish the existence of weak and strong solutions.
  \item  If $n=2$, we prove the continuous dependence of boundary-initial data and the uniqueness of weak-strong solutions;
  \item  If $n=2$, we obtain the existence of a uniform attractor for (\ref{1.2})-(\ref{1.3}).
\end{enumerate}

\noindent{\textbf{Notation}}. Throughout this paper, $c$ denotes a general constant may vary in different estimate. If the dependence need to be
explicitly stressed, some notations like $c_0,$ $c_1,$ $c(n)$ will be used. As usual, $L^p(\Omega)$, $W^{k,p}(\Omega)$ stand for the Lebesgue and Sobolev
spaces with $k\geq0$ and $p\geq1$. In particular, we denote  $W^{k,2}(\Omega)$ by $H^k(\Omega)$. Meanwhile,  we will use the shorthand notions
$\|\cdot\|_{L^2}$, $\|\cdot\|_{H^1}, \cdots$ instead of the norms defined in the domain $\Omega$, namely, $\|\cdot\|_{L^2(\Omega)}$,
$\|\cdot\|_{H^1(\Omega)}, \cdots$. Moreover, we set
\begin{align*}
\mathcal {D}=& \left\{v:~v\in C_0^{\infty}(\Omega,\mathbb{R}^3),\ {\rm div}~v=0\right\}, \\
H=& \text{closure of}~ \mathcal {D}~ \text{in}~ L^2(\Omega,\mathbb{R}^3),\\
V=& \text{closure of}~ \mathcal {D}~ \text{in}~ H_0^1(\Omega,\mathbb{R}^3), \\
V'=&\text{the dual of}~ V.
\end{align*}

Our main results are stated in the following theorems.
\begin{Theorem}\label{th1.1}
Let $T>0$,  $\Omega\subset\mathbb{R}^n$ ($n=2,3$) be a smooth bounded domain.  Suppose that
\begin{equation}\label{1.6}
h\in L^{q_n}([0,T];H^{\frac12}(\Gamma)),\quad \partial_th\in L^2([0,T];H^{-\frac12}(\Gamma)),
\end{equation}
where $q_n=4$ for $n=2$ and $q_n=8$ for $n=3$, $(u_0,b_0)\in H\times L^2(\Omega)$ with $u_0,b_0$
satisfy the compatibility condition (\ref{1.01}). Then  the problem (\ref{1.2})-(\ref{1.3}) admits a global weak solution $(u,b)$ such that
\begin{equation*}
(u,b)\in L^{\infty}([0,T];H\times L^2(\Omega))\cap L^{2}([0,T];V\times H^1(\Omega)).
\end{equation*}
In particular, if $n=2$, the problem (\ref{1.2})-(\ref{1.3}) admits a unique global weak solutions.
\end{Theorem}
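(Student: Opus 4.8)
The plan is to remove the inhomogeneous, time-dependent boundary datum by a divergence-free lifting, to build approximate solutions by a semi-Galerkin scheme, to derive uniform energy bounds, and finally to pass to the limit by compactness; uniqueness for $n=2$ is then obtained from a difference estimate based on Ladyzhenskaya's inequality.

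First I would construct a lifting $B(x,t)$ of the datum $h$. Since $h(\cdot,t)\in H^{\frac12}(\Gamma)$ and the compatibility condition (\ref{1.01}) forces the zero-flux condition $\int_\Gamma h\cdot\nu\,dS=0$, there is $B(\cdot,t)\in H^1(\Omega)$ with $\operatorname{div}B=0$, $B|_\Gamma=h$ and $\|B(\cdot,t)\|_{H^1}\le c\|h(\cdot,t)\|_{H^{\frac12}(\Gamma)}$ (solve a steady divergence/Stokes problem, or use a bounded extension operator; this is the content of Section~\ref{se2}). Hypothesis (\ref{1.6}) then yields $B\in L^{q_n}([0,T];H^1(\Omega))$ and, from the bound on $\partial_t h$, $\partial_t B\in L^2([0,T];H^{-1}(\Omega))$. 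Setting $\beta:=b-B$, the pair $(u,\beta)$ solves a system of the same structure as (\ref{1.3}) with $\beta|_{\Gamma}=0$, supplemented by explicit forcing terms that are linear in $\partial_tB,\Delta B$ and bilinear in the pairs $(B,\nabla B)$, $(u,\nabla B)$, $(\beta,\nabla B)$, $(B,\nabla u)$, $(B,\nabla\beta)$. All unknowns now live in $V$ and $H$.

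Next I would set up the semi-Galerkin approximation: fix the eigenfunctions $\{w_k\}\subset V$ of the Stokes operator, seek $u_m=\sum_{k=1}^m g_k^m(t)w_k$ solving the momentum equation projected onto $V_m=\operatorname{span}\{w_1,\dots,w_m\}$, while solving the magnetic equation for $\beta_m$ exactly as a linear parabolic problem driven by $u_m$ (this is the ``semi'' part, well adapted to the merely $L^2$ datum $b_0$). Local-in-time solvability of the resulting coupled ODE-PDE system follows from a Carath\'eodory/Schauder fixed-point argument, and global solvability from the bounds below. The central step is the energy estimate: testing the momentum equation with $u_m$ and the magnetic equation with $\beta_m$, the quadratic convective interactions cancel by the divergence-free structure (in particular $\int(u_m\cdot\nabla u_m)\cdot u_m=0$, and the Lorentz/induction cross terms telescope using $\operatorname{div}\beta_m=0$), leaving only terms containing the lifting $B$. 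Each such term is estimated by H\"older together with the embedding $H^1\hookrightarrow L^6$ ($n=3$) or $H^1\hookrightarrow L^p$ for all $p<\infty$ ($n=2$) and Gagliardo--Nirenberg interpolation, so that the top-order factors $\|\nabla u_m\|_{L^2}$, $\|\nabla\beta_m\|_{L^2}$ are absorbed into the dissipation by Young's inequality, the $\partial_tB$ contribution being treated through the duality pairing $\langle\partial_tB,\beta_m\rangle$.

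The main obstacle is precisely the control of these lifting terms: after Young's inequality each leaves a coefficient that is a power of $\|h(\cdot,t)\|_{H^{\frac12}(\Gamma)}$ multiplying $1+\|u_m\|_{L^2}^2+\|\beta_m\|_{L^2}^2$, and Gronwall's inequality closes the estimate only if that coefficient is integrable in time. Matching the power produced by the interpolation against the hypotheses is exactly what dictates (\ref{1.6}), the weaker three-dimensional embedding forcing the larger exponent $q_n=8$. This gives $(u_m,\beta_m)$ bounded in $L^\infty([0,T];H\times L^2)\cap L^2([0,T];V\times H^1)$ uniformly in $m$, together with a bound on $(\partial_tu_m,\partial_t\beta_m)$ in a negative-order space. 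By Banach--Alaoglu and the Aubin--Lions--Simon lemma I extract a subsequence converging weakly-$*$ and strongly in $L^2(Q_T)$; the strong $L^2$ convergence suffices to pass to the limit in the quadratic terms, and $b=\beta+B$ recovers a weak solution in the asserted class, the pressure being reconstructed a posteriori via de Rham's theorem. Finally, for $n=2$ and two weak solutions sharing the same data, the difference $(u^1-u^2,\,b^1-b^2)$ has homogeneous boundary values (the liftings cancel); testing its equations against itself and using Ladyzhenskaya's inequality $\|f\|_{L^4}^2\le c\|f\|_{L^2}\|\nabla f\|_{L^2}$ to absorb the nonlinearities into the dissipation, Gronwall's inequality (with $\int_0^T(\|\nabla u^i\|_{L^2}^2+\|\nabla b^i\|_{L^2}^2)\,dt<\infty$) forces the difference to vanish, which yields uniqueness.
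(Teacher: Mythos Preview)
Your proposal is correct and follows essentially the same route as the paper: a semi-Galerkin approximation in which only the velocity is projected onto the span of Stokes eigenfunctions while the magnetic equation is solved exactly, a lifting of the boundary datum to reduce to homogeneous Dirichlet conditions, the energy inequality closed by Gronwall with the coefficient $\|h\|_{H^{1/2}(\Gamma)}^{q_n}$ dictating the exponent in (\ref{1.6}), Aubin--Lions compactness for the passage to the limit, and the Ladyzhenskaya-based difference estimate for uniqueness when $n=2$. The only minor deviation is that the paper uses the harmonic extension $h_E$ of Lemma~\ref{le2.2} (together with a parabolic lifting $h_p$ for the local step) rather than a divergence-free Stokes extension, but this does not alter the structure of the argument.
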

Due to the time-dependent boundary condition (\ref{1.02}), the system (\ref{1.2})-(\ref{1.3}) no longer satisfies the dissipative energy law like the
autonomous case (see e.g. \cite{zhang32,zhang33}). However, by the lifting function $h_E$ (see (\ref{2.1})), we can also obtain a specific energy
inequality (\ref{3.20}). This, together with Lemma \ref{le2.2} implies   a uniform estimates for global weak solutions to (\ref{1.2})-(\ref{1.3})
on $Q_T$.

Based on Theorem \ref{th1.1}, under more regular assumptions for initial-boundary data we can further prove the existence of a unique global strong
solution to (\ref{1.2})-(\ref{1.3}) in two spatial dimensions.
\begin{Theorem}\label{th1.2}
Let $\Omega\subset\mathbb{R}^2$  be a smooth bounded domain.  Suppose that
\begin{equation}\label{1.8}
h\in L^2([0,T];H^{\frac32}(\Gamma)),\quad \partial_th\in L^2([0,T];H^{-\frac12}(\Gamma)),
\end{equation}
$(u_0,b_0)\in V\times H^1(\Omega)$ with $u_0,b_0$
satisfy the compatibility condition (\ref{1.01}). Then for any $T>0$, the problem (\ref{1.2})-(\ref{1.3})
admits a unique global strong solution $(u,b)$ such that
\begin{equation}\label{1.9}
(u,b)\in L^{\infty}([0,T];V\times H^1(\Omega))\cap L^{2}([0,T];H^2(\Omega)\times H^2(\Omega)).
\end{equation}
\end{Theorem}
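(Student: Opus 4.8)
The plan is to upgrade the weak solution furnished by Theorem~\ref{th1.1} to a strong one by deriving $H^1$-in-space a priori estimates, which in two dimensions close thanks to the Ladyzhenskaya interpolation inequality. Since Theorem~\ref{th1.1} already provides a \emph{unique} global weak solution when $n=2$, it suffices to establish the additional regularity (\ref{1.9}); uniqueness in the strong class is then inherited from the weak uniqueness. First I would homogenize the boundary condition for $b$: using the stronger hypothesis $h\in L^2([0,T];H^{\frac32}(\Gamma))$ together with the trace theorem, one constructs a divergence-free lifting $h_E\in L^2([0,T];H^2(\Omega))$ with $h_E|_{\Gamma}=h$ and with $\partial_t h_E$ controlled by $\partial_t h$. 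Writing $b=\beta+h_E$, the unknown $\beta$ solves the second equation of (\ref{1.3}) with homogeneous Dirichlet data $\beta|_{\Gamma}=0$, $\operatorname{div}\beta=0$ and a forcing term collecting $-\partial_t h_E+\Delta h_E-u\cdot\nabla h_E+h_E\cdot\nabla u$. I would then run the semi-Galerkin scheme of Section~\ref{se3}, projecting $u$ onto Stokes eigenfunctions and $\beta$ onto the corresponding divergence-free $H_0^1$ basis, and derive bounds uniform in the index $m$; the $L^2$-energy bound is supplied by the energy inequality (\ref{3.20}) and Lemma~\ref{le2.2}.

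The heart of the matter is the $H^1$ estimate. Testing the velocity equation with the Stokes operator $Au=-P\Delta u$, where $P$ is the Leray projector, and the $\beta$-equation with $-\Delta\beta$, and adding, one obtains a differential identity of the form
\begin{equation*}
\frac12\frac{d}{dt}\big(\|\nabla u\|_{L^2}^2+\|\nabla\beta\|_{L^2}^2\big)+\|\Delta u\|_{L^2}^2+\|\Delta\beta\|_{L^2}^2=\mathcal N,
\end{equation*}
where $\mathcal N$ gathers the convective terms, the coupling between $u$ and $b$, and the contributions of the lifting $h_E$. Each trilinear term is estimated by the two-dimensional inequalities $\|v\|_{L^4}^2\le c\|v\|_{L^2}\|\nabla v\|_{L^2}$ and $\|\nabla v\|_{L^4}^2\le c\|\nabla v\|_{L^2}\|\Delta v\|_{L^2}$; for instance $|\int(u\cdot\nabla u)\cdot\Delta u\,dx|\le c\|u\|_{L^2}^{1/2}\|\nabla u\|_{L^2}\|\Delta u\|_{L^2}^{3/2}$, which by Young's inequality is absorbed into $\tfrac14\|\Delta u\|_{L^2}^2$ at the cost of $c\|u\|_{L^2}^2\|\nabla u\|_{L^2}^4$. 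The coupling terms are handled identically, exploiting the divergence-free structure and the symmetry of the MHD nonlinearities to avoid a top-order loss, while the lifting terms are bounded using $h_E\in L^2([0,T];H^2(\Omega))$ and the control on $\partial_t h_E$ inherited from (\ref{1.8}), with Young's inequality again absorbing the factors $\|\Delta u\|_{L^2}$ and $\|\Delta\beta\|_{L^2}$.

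After the absorptions, $y:=\|\nabla u\|_{L^2}^2+\|\nabla\beta\|_{L^2}^2$ satisfies
\begin{equation*}
\frac{d}{dt}y+\tfrac12\big(\|\Delta u\|_{L^2}^2+\|\Delta\beta\|_{L^2}^2\big)\le \phi\,y+g,
\end{equation*}
where the coefficients $\phi,g\in L^1([0,T])$ are built from the $L^2$-energy quantities $\|\nabla u\|_{L^2}^2$, $\|\nabla\beta\|_{L^2}^2$ and the data (\ref{1.8}); here the crucial point is that in two dimensions one factor $\|\nabla u\|_{L^2}^2$ is carried by $y$ and the remaining one is integrable in time. Since the $L^2$-energy bound of Theorem~\ref{th1.1} guarantees $\int_0^T\phi\,dt<\infty$, Gronwall's lemma yields $y\in L^\infty([0,T])$, and integrating back gives $\Delta u,\Delta\beta\in L^2(Q_T)$. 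These bounds being uniform in $m$, the Aubin--Lions lemma permits passage to the limit and recovers a solution satisfying (\ref{1.9}); undoing $b=\beta+h_E$ restores the bound for $b$, and the continuity into $V\times H^1$ follows from (\ref{1.9}). The main obstacle is precisely this $H^1$ estimate: one must verify that every nonlinear and lifting term genuinely closes under the two-dimensional interpolation inequalities with \emph{time-integrable} coefficients, and in particular that the hypotheses (\ref{1.8}) are exactly strong enough to make the forcing generated by $h_E$ an admissible right-hand side without costing regularity in the top-order terms.
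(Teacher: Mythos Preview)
Your proposal is correct and follows essentially the same route as the paper: lift the boundary datum to homogenize $b$, test the $u$-equation with the Stokes operator and the lifted $b$-equation with $-\Delta$, close the trilinear terms with two-dimensional interpolation, and apply Gronwall using the $L^2$-energy bound from Theorem~\ref{th1.1}. Two cosmetic differences are worth noting. First, the paper works with the \emph{parabolic} lifting $h_p$ of Lemma~\ref{le2.3} (so $\hat b=b-h_p$ satisfies an equation with no residual forcing from $\partial_t h_p-\Delta h_p$), whereas you use the elliptic lifting $h_E$; either choice works under (\ref{1.8}), and the resulting extra terms in your $\beta$-equation are indeed harmless after a Young absorption. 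Second, the paper estimates the trilinear terms via the $2$D Agmon inequality $\|v\|_{L^\infty}\le c\|v\|_{L^2}^{1/2}\|v\|_{H^2}^{1/2}$ rather than your $L^4$--Ladyzhenskaya route; both yield the same structure $y'\le Ky+g$ with $K\in L^1(0,T)$. One small point: on the left side you should write $\|Au\|_{L^2}^2$ (equivalently $\|Su\|_{L^2}^2$) rather than $\|\Delta u\|_{L^2}^2$, since testing against the Stokes operator produces the former; Lemma~\ref{le2.1} then converts this to $H^2$ control.
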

As a consequence, from (\ref{1.8})-(\ref{1.9}), one can easily verify that
\begin{equation}\label{1.08}
(\partial_tu,\partial_td)\in L^2([0,T];H\times L^2(\Omega)).
\end{equation}
With the help of the interpolation (cf. \cite{wu43}), then (\ref{1.08}) implies the continuity of $(u,b)$, i.e.,  $(u,b)\in C([0,T];V\times H^1(\Omega))$.

Finally, according to Definitions  \ref{de4.1}-\ref{de4.6} and the existence of weak and strong solutions, we can derive the existence of a uniform
attractor for (\ref{1.3})
\begin{Theorem}\label{th1.3}
Let $\Omega\subset\mathbb{R}^2$  be a smooth bounded domain. Let all assumptions of Theorem \ref{th1.2} and (A2) (see Section \ref{se4.1}) be verified.
Then the process $\left\{U_h(t,\tau)\right\}$ generated by the solution operator of (\ref{1.3}) admits a compact
uniform (w.r.t. $h\in \Sigma_1$) attractor $\mathcal {A}_{\Sigma_1}$ in $V\times H^1$, which uniformly (w.r.t. $h\in \Sigma_1$)
attracts the bounded sets in $X$. Furthermore, there holds
\begin{equation*}
\mathcal {A}_{\Sigma_1}=\bigcup_{h\in \Sigma_1}\mathcal {K}_h(0),
\end{equation*}
where $\mathcal {K}_h$ is the kernel of the process $\left\{U_h(t,\tau)\right\}$ and $\mathcal {K}_h$ is nonempty for all $h\in \Sigma_1$.
\end{Theorem}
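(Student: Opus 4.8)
The plan is to realize Theorem \ref{th1.3} as an application of the abstract theory of uniform attractors for families of processes (in the spirit of Chepyzhov--Vishik), whose three ingredients are: a translation-compact symbol space, a bounded uniformly absorbing set, and uniform asymptotic compactness. First I would fix the phase space $V \times H^1(\Omega)$ and, for each boundary datum $h \in \Sigma_1$, define the process $\{U_h(t,\tau)\}_{t \geq \tau}$ by setting $U_h(t,\tau)(u_0,b_0) = (u(t),b(t))$, where $(u,b)$ is the unique global strong solution of (\ref{1.3}) on $[\tau,\infty)$ with boundary data $h$ and data $(u_0,b_0)$ prescribed at time $\tau$; this map is well-defined and continuous on the phase space by Theorem \ref{th1.2} together with the continuous dependence on initial-boundary data asserted in Theorem \ref{th1.1}. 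The symbol space $\Sigma_1$ is taken as the hull of $h$, i.e. the closure of the time-translates $\{h(\cdot+s):s\geq 0\}$ in an appropriate topology; assumption (A2) is precisely what guarantees that $\Sigma_1$ is compact and translation-invariant, and one checks directly the translation identity $U_{T(s)h}(t,\tau) = U_h(t+s,\tau+s)$ linking the process family to the symbol dynamics.

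Second, I would establish a bounded set $B_0$ in $V \times H^1$ that is uniformly (w.r.t. $h \in \Sigma_1$) absorbing. This rests on the same energy machinery used to prove Theorem \ref{th1.2}, now run in dissipative form. After subtracting the lifting function $h_E$ (see (\ref{2.1})) one derives a differential inequality of the shape $\frac{d}{dt}E(t) + c\,E(t) \leq g_h(t)$ for the $V \times H^1$-energy $E$, where the forcing $g_h$ is controlled, uniformly over $\Sigma_1$, by the norms of $h$ appearing in (\ref{1.8}). Integration via the uniform Gronwall lemma produces a radius $R_0$, independent of $h$ and of the initial time $\tau$, such that every trajectory enters and remains in $B_0 = \{(u,b):\|(u,b)\|_{V\times H^1}\leq R_0\}$ after a finite absorbing time; the key point is that the bound on $g_h$ depends only on quantities that stay uniformly bounded on the compact hull $\Sigma_1$. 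I would also note the instantaneous parabolic regularization in two dimensions, so that even data bounded merely in $X$ are carried into $V \times H^1$ after a positive time, whence $B_0$ absorbs the bounded subsets of $X$.

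The crux of the argument, and the step I expect to be the main obstacle, is the uniform asymptotic compactness of the family $\{U_h(t,\tau)\}$. Here I would exploit the parabolic smoothing available in two dimensions: the strong-solution regularity (\ref{1.9}) yields $(u,b)\in L^2_{\mathrm{loc}}([\tau,\infty);H^2\times H^2)$, and since $H^2(\Omega)$ embeds compactly into $H^1(\Omega)$ the embedding into $V \times H^1(\Omega)$ is compact as well. To extract a convergent subsequence from any sequence $U_{h_n}(t_n,\tau)(u_0^n,b_0^n)$ with $t_n\to\infty$, $h_n\in\Sigma_1$, and initial data in a bounded set, I must upgrade this to a bound that is \emph{uniform} in $h$ and does not deteriorate as the initial time recedes. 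I would therefore combine the uniform absorbing estimate with a higher-order energy estimate obtained by testing (\ref{1.3}) against $-\Delta u$ and $-\Delta b$ (after removing $h_E$), applying the uniform Gronwall lemma once more to close an $H^2$-bound on time intervals of fixed length lying far in the future; together with the bound (\ref{1.08}) on $(\partial_t u,\partial_t b)$ this gives precompactness in $V \times H^1$. The nonlinear terms $u\cdot\nabla u$, $b\cdot\nabla b$, $u\cdot\nabla b$, $b\cdot\nabla u$ are handled by the two-dimensional Ladyzhenskaya inequality exactly as in the proof of Theorem \ref{th1.2}, which is what restricts this result to $n=2$.

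Finally, with these three ingredients verified, the abstract theorem underlying Definitions \ref{de4.1}--\ref{de4.6} yields a compact uniform (w.r.t. $h\in\Sigma_1$) attractor $\mathcal{A}_{\Sigma_1}$ in $V \times H^1$ that uniformly attracts the bounded subsets of $X$. The representation $\mathcal{A}_{\Sigma_1}=\bigcup_{h\in\Sigma_1}\mathcal{K}_h(0)$ then follows from the general structure theorem: each kernel $\mathcal{K}_h$ consists of the complete bounded trajectories of the process $\{U_h(t,\tau)\}$, and its nonemptiness is obtained by running the uniform absorbing bound backward in time, using the translation-invariance of $\Sigma_1$ together with the uniform radius $R_0$ to produce at least one bounded complete trajectory for every $h\in\Sigma_1$.
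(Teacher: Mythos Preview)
Your overall architecture is right: invoke the abstract Chepyzhov--Vishik machinery (Theorem~\ref{th4.1}) after establishing a uniformly absorbing set in $V\times H^1$ (this is Lemma~\ref{le4.3} in the paper, proved exactly via the uniform Gronwall argument you describe) and checking weak continuity of the process with respect to data and symbol. The representation $\mathcal{A}_{\Sigma_1}=\bigcup_{h}\mathcal{K}_h(0)$ then comes for free.

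The gap is in your asymptotic compactness step. Testing (\ref{1.3}) against $-\Delta u$ and $-\Delta\hat b$ yields only $\frac{d}{dt}(\|\nabla u\|_{L^2}^2+\|\nabla\hat b\|_{L^2}^2)+\|Su\|_{L^2}^2+\|\Delta\hat b\|_{L^2}^2\le\cdots$, i.e.\ an $L^\infty_t(H^1)\cap L^2_t(H^2)$ bound, \emph{not} a pointwise $H^2$ bound. With only $L^2_t(H^2)$ control and $\partial_t(u,b)\in L^2_t(L^2)$, Aubin--Lions/Simon gives precompactness in $L^2_t(V\times H^1)$ or in $C_t(H\times L^2)$, but not in $V\times H^1$ at a fixed time, which is what uniform attraction in $V\times H^1$ requires. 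Getting $L^\infty_t(H^2)$ would need more regular boundary data than (A2) provides, so the compact-embedding route does not close under the stated hypotheses.

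The paper circumvents this by a genuinely different mechanism: it proves uniform $\omega$-limit compactness via the finite-dimensional-projection criterion of Lemma~\ref{le4.1}. One splits $u=P_nu+(I-P_n)u$ and $\tilde b=Q_m\tilde b+(I-Q_m)\tilde b$ along Stokes and Dirichlet--Laplace eigenspaces, then derives an energy inequality for the tails $u_2=(I-P_n)u$, $b_2=(I-Q_m)\tilde b$ by testing against $Su_2$ and $-\Delta b_2$. The dissipation satisfies $\|Su_2\|_{L^2}^2\ge\lambda_{n+1}\|\nabla u_2\|_{L^2}^2$ and $\|\Delta b_2\|_{L^2}^2\ge\mu_{m+1}\|\nabla b_2\|_{L^2}^2$, and the nonlinear terms are controlled using the Br\'ezis--Gallouet inequality (Lemma~\ref{le2.4}) for $\|u_1\|_{L^\infty}$ together with the absorbing-set bounds $\rho_0,\rho_2$. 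Gronwall then shows $\|\nabla u_2\|_{L^2}^2+\|\nabla b_2\|_{L^2}^2$ can be made uniformly small for $n,m$ large and $t$ large, which is precisely condition $(i_2)$ of Lemma~\ref{le4.1}. This spectral tail-decay argument is the key idea you are missing.
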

The rest of our paper is organized as follows. First of all, in Section \ref{se2}, we present some useful lemma which will be heavily used in our proof.
Next, in Section \ref{se3}, we prove the global existence of weak solutions and strong solutions to (\ref{1.2})-(\ref{1.3}). Further, for $n=2$, we also
derive the continuous dependence of initial-boundary data and uniqueness of weak-strong solutions. Finally, we obtain the existence of a uniform attractor
for (\ref{1.2})-(\ref{1.3})
\section{Preliminary}\label{se2}
Throughout this section, we collect some helpful results, some of which have been proven elsewhere. The following is a  regularity result for the Stokes
problem (see e.g., \cite{wu47} Chapter 1, Proposition 2.2).
\begin{Lemma}\label{le2.1}
Suppose the Stokes operator $S:~D(S)=V\cap H^2(\Omega)\longrightarrow H$ defined by
\begin{equation*}
Su=-\Delta u+\nabla P\in H,\quad \forall u\in D(S),
\end{equation*}
where $P\in H^1(\Omega)$. Then it holds that
\begin{equation*}
\|u\|_{H^2}+\|P\|_{H^1/\mathbb{R}}\leq c\|Su\|_{L^2},\quad \forall u\in D(S),
\end{equation*}
where $c=c(n,\Omega)$.
\end{Lemma}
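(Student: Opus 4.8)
The plan is to read the stated inequality as the a priori $H^2\times H^1$ regularity estimate for the stationary Stokes system
\begin{equation*}
-\Delta u+\nabla P=f,\qquad \mathrm{div}\,u=0\ \ \text{in}\ \Omega,\qquad u|_{\Gamma}=0,
\end{equation*}
in which $u\in D(S)$ is given and $f:=Su\in H\subset L^2(\Omega)$, and to prove it in two stages: first the energy-level bound, then the full second-order bound with absorption of the lower-order terms. That $P$ is determined only up to an additive constant is built into the definition of $S$ (by de Rham's theorem the $H^{-1}$-functional $f+\Delta u$ annihilating $V$ equals $\nabla P$ for some $P\in L^2$ unique modulo constants), which is exactly why the quotient norm $\|P\|_{H^1/\mathbb R}$ is the right object to bound.

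For the energy bound I would test the first equation with $u$ itself; since $\mathrm{div}\,u=0$ and $u|_\Gamma=0$ the pressure contribution $\int_\Omega\nabla P\cdot u\,dx=-\int_\Omega P\,\mathrm{div}\,u\,dx$ vanishes, leaving $\|\nabla u\|_{L^2}^2=\langle f,u\rangle\le\|f\|_{L^2}\|u\|_{L^2}$. Poincar\'e's inequality on $V$ then gives $\|u\|_{H^1}\le c\|f\|_{L^2}$. For the second-order bound I would localize with a finite partition of unity subordinate to a covering of $\overline\Omega$. On interior patches the method of difference quotients applied to the Stokes system controls $\|u\|_{H^2}$ and $\|\nabla P\|_{L^2}$ locally by $\|f\|_{L^2}$ plus lower-order norms. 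On boundary patches I would flatten $\Gamma$ by a smooth diffeomorphism, take tangential difference quotients to bound all tangential second derivatives, and then recover the remaining normal second derivative of $u$ and the full gradient of $P$ algebraically from the two equations together with $\mathrm{div}\,u=0$. Summing the local estimates yields
\begin{equation*}
\|u\|_{H^2}+\|P\|_{H^1}\le c\big(\|f\|_{L^2}+\|u\|_{L^2}+\|P\|_{L^2}\big).
\end{equation*}

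It then remains to absorb the two lower-order terms. The velocity term is handled by the energy bound $\|u\|_{L^2}\le c\|f\|_{L^2}$; for the pressure I would invoke the Ne\v{c}as inequality $\|P\|_{L^2/\mathbb R}\le c\|\nabla P\|_{H^{-1}}$ and, using $\nabla P=f+\Delta u$, estimate $\|P\|_{L^2/\mathbb R}\le c(\|f\|_{L^2}+\|u\|_{H^1})\le c\|f\|_{L^2}$, so that normalizing $P$ to zero mean renders $\|P\|_{L^2}$ harmless. The hard part is the boundary regularity step: verifying that the Stokes system is elliptic in the Agmon--Douglis--Nirenberg sense and satisfies the complementing (Lopatinski--Shapiro) condition, and rigorously recovering the normal derivatives near $\Gamma$. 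This is where the smoothness of $\Omega$ enters and produces the geometric dependence $c=c(n,\Omega)$. As the result is classical, in the paper I would simply cite \cite{wu47}, but the scheme above is the self-contained route.
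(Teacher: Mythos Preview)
Your sketch is a correct outline of the classical proof, and you have anticipated the paper exactly: the paper does not prove this lemma at all but simply records it with the reference to \cite{wu47}, Chapter~1, Proposition~2.2. There is nothing to compare beyond noting that your final sentence already matches what the paper does.
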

In order to deal with the non-autonomous boundary term and obtain proper energy estimates for global  solutions, we introduce some suitable  lifting
functions. The first lifting problem for (\ref{1.3}) is defined by:
\begin{equation}\label{2.1}
\begin{cases}
-\Delta h_E=0&\text{in} ~\Omega,\\
h_E=h(x,t) &\text{on}~ \Gamma.
\end{cases}
\end{equation}
Taking into account the classical elliptic regularity theory (see e.g., \cite{wu39,wu46}), we have the  existence and regularity result:
\begin{Lemma}\label{le2.2}
Suppose that $h$ satisfies (\ref{1.6}), then the lifting problem (\ref{2.1}) admits  a unique solution $h_E$ such that
\begin{equation*}
h_E\in H^1([0,T];L^2(\Omega))\cap L^{\infty}([0,T];H^{\frac12}(\Omega))\cap L^2([0,T];H^{1}(\Omega)).
\end{equation*}
Furthermore, for  $t\in [0,T]$  the following regularity results hold:
\begin{align*}
\int_{0}^{t}\|h_E(\tau)\|_{H^{k+1}}^{2}dt&\leq c\int_{0}^{t}\|h(\tau)\|_{H^{k+\frac{1}{2}}(\Gamma)}^{2}d\tau;\\
\int_{0}^{t}\|\partial_t h_E(\tau)\|_{H^k}^{2}dt&\leq c\int_{0}^{t}\|\partial_t h(\tau)\|_{H^{k-\frac{1}{2}}(\Gamma)}^{2}d\tau,
\end{align*}
where $k=0,1$.
\end{Lemma}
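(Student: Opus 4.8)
The plan is to treat (\ref{2.1}) at each fixed time as a stationary Dirichlet problem for the Laplacian, and then to recover the time regularity by integrating the pointwise-in-$t$ elliptic estimates. First I would fix $t\in[0,T]$ and regard $h(\cdot,t)\in H^{\frac12}(\Gamma)$ as boundary data. The harmonic extension problem $-\Delta h_E=0$ in $\Omega$, $h_E=h(\cdot,t)$ on $\Gamma$, has a unique solution $h_E(\cdot,t)\in H^1(\Omega)$: existence follows by minimizing the Dirichlet energy over the affine set of $H^1(\Omega)$ functions with the prescribed trace (the trace map $H^1(\Omega)\to H^{\frac12}(\Gamma)$ being surjective), while uniqueness follows since the difference of two solutions is harmonic with zero trace and hence vanishes. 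This defines a time-independent linear solution operator $E\colon g\mapsto h_E$, so that $h_E(\cdot,t)=E\,h(\cdot,t)$.

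The core of the argument is the elliptic regularity estimate for $E$: for the Dirichlet problem on a smooth bounded domain one has $\|Eg\|_{H^{s+\frac12}(\Omega)}\le c\,\|g\|_{H^{s}(\Gamma)}$ for the relevant range of $s$ (see \cite{wu39,wu46}). Taking $s=k+\tfrac12$ gives $\|h_E(t)\|_{H^{k+1}}\le c\,\|h(t)\|_{H^{k+\frac12}(\Gamma)}$ pointwise in $t$; squaring and integrating over $[0,t]$ yields the first asserted inequality. Next I would differentiate (\ref{2.1}) in time: since $E$ is linear and independent of $t$, the function $\partial_t h_E$ solves the same Dirichlet problem with boundary datum $\partial_t h$, i.e. $\partial_t h_E=E(\partial_t h)$. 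Applying the elliptic estimate with $s=k-\tfrac12$ then gives $\|\partial_t h_E(t)\|_{H^{k}}\le c\,\|\partial_t h(t)\|_{H^{k-\frac12}(\Gamma)}$, and integration in time produces the second inequality.

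Finally I would read off the three membership statements. The bound with $k=0$ in the first inequality, together with $h\in L^{q_n}([0,T];H^{\frac12}(\Gamma))\subset L^2([0,T];H^{\frac12}(\Gamma))$ on the finite interval, gives $h_E\in L^2([0,T];H^1(\Omega))$; the bound with $k=0$ in the second inequality, together with $\partial_t h\in L^2([0,T];H^{-\frac12}(\Gamma))$, gives $\partial_t h_E\in L^2([0,T];L^2(\Omega))$, that is, $h_E\in H^1([0,T];L^2(\Omega))$. The intermediate regularity $L^\infty([0,T];H^{\frac12}(\Omega))$ is then obtained for free from the interpolation embedding $L^2([0,T];H^1(\Omega))\cap H^1([0,T];L^2(\Omega))\hookrightarrow C([0,T];H^{\frac12}(\Omega))$, where $H^{\frac12}=[H^1,L^2]_{\frac12}$ (cf. \cite{wu43}); this is in fact slightly stronger than the stated $L^\infty$ bound.

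The main obstacle will be the low-regularity time-derivative estimate, namely the case $k=0$ of the second inequality, in which the boundary datum $\partial_t h$ lies only in $H^{-\frac12}(\Gamma)$. For such rough data the extension $E(\partial_t h)$ must be understood in the sense of transposition (the very weak, Lions--Magenes solution), so one has to verify both that $E$ extends to a bounded operator $H^{-\frac12}(\Gamma)\to L^2(\Omega)$ and that differentiation in time genuinely commutes with $E$ at this regularity level. I would make the latter rigorous via difference quotients in $t$: applying the $s=-\tfrac12$ elliptic bound to $E\big((h(t+\eta)-h(t))/\eta\big)$ and passing to the limit using $\partial_t h\in L^2([0,T];H^{-\frac12}(\Gamma))$ identifies the limit as $E(\partial_t h)$ and delivers the stated estimate.
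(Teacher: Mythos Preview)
Your proposal is correct and in fact more detailed than the paper's own treatment: the paper does not give a proof of this lemma at all, merely stating that it follows from ``the classical elliptic regularity theory (see e.g., \cite{wu39,wu46}).'' Your outline --- pointwise-in-$t$ harmonic extension, the Lions--Magenes trace/lifting estimate $\|Eg\|_{H^{s+1/2}(\Omega)}\le c\|g\|_{H^s(\Gamma)}$, commuting the time-independent operator $E$ with $\partial_t$, and the transposition interpretation for the $k=0$ time-derivative case --- is exactly what those references supply, so you have simply unpacked the citation.
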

The second lifting problem for (\ref{1.3}) has the following parabolic type:
\begin{equation}\label{2.2}
\begin{cases}
\partial_th_p-\Delta h_p=0&\text{in} ~Q_T,\\
h_p(x,0)=b_0(x) &\text{in} ~ \Omega,\\
h_p(x,t)=h(x,t) &\text{on} ~ \Gamma_T.
\end{cases}
\end{equation}
From the standard theory of linear parabolic system (see e.g., \cite{wu39}), the following results hold:
\begin{Lemma}\label{le2.3}
(1) Suppose that  $b_0\in L^2(\Omega)$ and (\ref{1.01}), (\ref{1.6}) hold. Then for $t\in [0,T]$, there exists a unique weak solution to (\ref{2.2})
\begin{equation*}
 h_p\in L^{\infty}([0,T];L^{2}(\Omega))\cap L^2([0,T];H^{1}(\Omega)),
\end{equation*}
and  the following estimate holds
\begin{equation}\label{2.3}
\|h_p(t)\|_{L^2}^{2}+\int_{0}^{t}\|\nabla h_p\|_{L^2}^{2}dt\leq\|b_0\|_{L^2}^{2}+c\int_{0}^{t}\|h\|_{H^\frac{1}{2}(\Gamma)}^{2}dt.
\end{equation}
(2) Let $b_0\in H^1(\Omega)$ and (\ref{1.01}), (\ref{1.8}) are satisfied.  Then  (\ref{2.2}) admits a unique strong solution
\begin{equation*}
h_p\in L^{\infty}([0,T];H^{1}(\Omega))\cap L^2([0,T];H^{2}(\Omega)).
\end{equation*}
Furthermore, for all $t\in [0,T]$, there holds
\begin{equation}\label{2.4}
\|h_p\|_{H^{1}}^{2}+\int_{0}^{t}\|h_p(\tau)\|_{H^{2}}^{2}d\tau\leq\|b_0\|_{H^{1}}^{2}+c\int_{0}^{t}(\|\partial_t h(\tau)\|_{H^{-\frac{1}{2}}(\Gamma)}^{2}
+\|h(\tau)\|_{H^\frac{3}{2}(\Gamma)}^{2})d\tau.
\end{equation}
\end{Lemma}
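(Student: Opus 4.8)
\emph{Proof proposal.} The key idea is to homogenize the time-dependent Dirichlet condition with the harmonic lifting $h_E$ of Lemma \ref{le2.2} and to reduce (\ref{2.2}) to a heat equation with homogeneous boundary data, for which the cited standard parabolic theory applies. Setting $w:=h_p-h_E$ and using $\Delta h_E=0$, the function $w$ solves $\partial_t w-\Delta w=-\partial_t h_E$ in $Q_T$, $w=0$ on $\Gamma_T$, and $w(0)=b_0-h_E(0)$. By the compatibility condition (\ref{1.01}) the initial trace satisfies $w(0)|_\Gamma=b_0|_\Gamma-h|_{t=0}=0$, so $w(0)$ lies in the correct homogeneous space. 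Existence and uniqueness of $w$, hence of $h_p$, then follow from the standard Faedo--Galerkin scheme built on the eigenfunctions of the Dirichlet Laplacian (equivalently, from the heat semigroup and Duhamel's formula); uniqueness is immediate since the difference of two solutions solves the same problem with zero data.

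For part (1) I would test the equation for $w$ with $w$ itself, giving
\[
\tfrac12\frac{d}{dt}\|w\|_{L^2}^2+\|\nabla w\|_{L^2}^2=-\int_\Omega \partial_t h_E\cdot w\,dx .
\]
Integrating in time, bounding the lifting contributions through Lemma \ref{le2.2} with $k=0$, and applying Young's and Gronwall's inequalities yields $w\in L^\infty([0,T];L^2)\cap L^2([0,T];H^1_0)$. Returning to $h_p=w+h_E$ and invoking Lemma \ref{le2.2} to estimate $\int_0^t\|\nabla h_E\|_{L^2}^2\le c\int_0^t\|h\|_{H^{1/2}(\Gamma)}^2$ then produces the bound (\ref{2.3}). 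The delicate point is the forcing term $\int_0^t\int_\Omega\partial_t h_E\cdot w$: to reach a right-hand side expressed only through $h$ as in (\ref{2.3}), I would integrate by parts in time so as to trade $\partial_t h_E$ for $h_E$, at the cost of the endpoint terms $\int_\Omega h_E(t)\cdot w(t)$, which are absorbed into $\|w(t)\|_{L^2}^2$ by Young's inequality.

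For part (2), the additional regularity $b_0\in H^1$, $h\in L^2([0,T];H^{3/2}(\Gamma))$ and $\partial_t h\in L^2([0,T];H^{-1/2}(\Gamma))$ gives, via Lemma \ref{le2.2}, $h_E\in L^2([0,T];H^2)$ and $\partial_t h_E\in L^2([0,T];L^2)$; thus the forcing $-\partial_t h_E$ of the $w$-equation lies in $L^2([0,T];L^2)$ and, by compatibility, $w(0)=b_0-h_E(0)\in H^1_0$. I would then test the $w$-equation with $-\Delta w$ to obtain
\[
\tfrac12\frac{d}{dt}\|\nabla w\|_{L^2}^2+\|\Delta w\|_{L^2}^2=\int_\Omega\partial_t h_E\cdot\Delta w\,dx\le\tfrac12\|\Delta w\|_{L^2}^2+\tfrac12\|\partial_t h_E\|_{L^2}^2 ,
\]
whence, after integrating in time and applying the elliptic regularity $\|w\|_{H^2}\le c\|\Delta w\|_{L^2}$ for the homogeneous Dirichlet problem, $w\in L^\infty([0,T];H^1_0)\cap L^2([0,T];H^2)$. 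Adding back $h_E$ and controlling its norms by $\|h\|_{H^{3/2}(\Gamma)}$ and $\|\partial_t h\|_{H^{-1/2}(\Gamma)}$ through Lemma \ref{le2.2} gives the strong solution and the estimate (\ref{2.4}); uniqueness again follows from linearity.

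The main obstacle I anticipate is the bookkeeping in part (1): recovering the clean estimate (\ref{2.3}), whose right-hand side sees only $\|h\|_{H^{1/2}(\Gamma)}$, requires handling the boundary and lifting terms so that the anisotropic regularity encoded in (\ref{1.6}), i.e. the interplay between $h\in L^{q_n}(H^{1/2}(\Gamma))$ and $\partial_t h\in L^2(H^{-1/2}(\Gamma))$, is used just enough to absorb the time-derivative contributions without leaving a spurious $\partial_t h$ on the right-hand side. In the strong-solution regime of part (2) this difficulty is milder, since the $\partial_t h$-dependence is explicitly allowed in (\ref{2.4}).
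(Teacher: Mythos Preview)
The paper does not give a proof of this lemma; it merely invokes ``the standard theory of linear parabolic systems'' and cites Lions--Magenes. Your approach---homogenizing the boundary datum by subtracting the harmonic lifting $h_E$ and reducing to the heat equation for $w=h_p-h_E$ with zero Dirichlet data---is exactly the standard route and is correct. Your argument for part (2) (test with $-\Delta w$, absorb, use $\|w\|_{H^2}\le c\|\Delta w\|_{L^2}$, then add back $h_E$) is clean and complete.

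Your caution about part (1) is well placed, and in fact the integration-by-parts-in-time trick you sketch does not close: writing $-\int_0^t\!\int_\Omega\partial_t h_E\cdot w = -\int_\Omega h_E\cdot w\big|_0^t + \int_0^t\!\int_\Omega h_E\cdot\partial_t w$ and inserting $\partial_t w=\Delta w-\partial_t h_E$ produces, after Green's identity and $\Delta h_E=0$, a boundary flux $\int_0^t\!\int_\Gamma h\,\partial_\nu w$ that cannot be bounded by $\|\nabla w\|_{L^2}$ alone (the normal trace of $\nabla w$ in $H^{-1/2}(\Gamma)$ requires control of $\Delta w$). The direct energy identity for $w$ already gives
\[
\|w(t)\|_{L^2}^2+\int_0^t\|\nabla w\|_{L^2}^2
\le \|b_0-h_E(0)\|_{L^2}^2 + c\int_0^t\|\partial_t h\|_{H^{-1/2}(\Gamma)}^2,
\]
using $\|\partial_t h_E\|_{L^2}\le c\|\partial_t h\|_{H^{-1/2}(\Gamma)}$ from Lemma \ref{le2.2} and Poincar\'e; adding back $h_E$ supplies the $\int_0^t\|h\|_{H^{1/2}(\Gamma)}^2$ term. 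This yields the stated regularity $h_p\in L^\infty L^2\cap L^2H^1$ under (\ref{1.6}) and is exactly what the paper actually uses downstream (note the explicit dependence on $\|\partial_t h\|_{L^2_tH^{-1/2}_x}$ in the constant $c_1$ of (\ref{3.15})). So there is no need to force the right-hand side of (\ref{2.3}) to depend on $\|h\|_{H^{1/2}(\Gamma)}$ alone; take the estimate with the additional $\partial_t h$ contribution and move on.
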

In addition, we shall use a interpolation and we formulate it in the form we need (cf. \cite{bo1}).
\begin{Lemma}\label{le2.4}
Let $\Omega\subset\mathbb{R}^2$ with compact smooth boundary and $g\in H^2(\Omega)$,  then
\begin{equation*}
\|g\|_{L^{\infty}}\leq c\|g\|_{H^1}\left(1+\ln\frac{\|g\|_{H^2}^{2}}{\|g\|_{H^1}^{2}}\right)^{\frac{1}{2}},
\end{equation*}
where the constant $c$ depends only on the domain $\Omega$.
\end{Lemma}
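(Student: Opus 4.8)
The plan is to reduce the estimate to the whole plane $\mathbb{R}^2$ by extension, and then prove it there by a Fourier frequency-splitting argument of Brezis--Gallouet type. First I would invoke a bounded linear extension operator $E:H^2(\Omega)\to H^2(\mathbb{R}^2)$ associated to the smooth bounded domain $\Omega$, which is simultaneously bounded as a map $H^1(\Omega)\to H^1(\mathbb{R}^2)$; writing $\tilde g=Eg$ we have $\|g\|_{L^\infty(\Omega)}\le\|\tilde g\|_{L^\infty(\mathbb{R}^2)}$ together with $\|\tilde g\|_{H^k(\mathbb{R}^2)}\le c\|g\|_{H^k(\Omega)}$ for $k=1,2$, where $c=c(\Omega)$. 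Since restriction also gives $\|g\|_{H^1(\Omega)}\le\|\tilde g\|_{H^1(\mathbb{R}^2)}$, the ratio $\|\tilde g\|_{H^2}^2/\|\tilde g\|_{H^1}^2$ is comparable, up to a multiplicative constant depending only on $\Omega$, to $\|g\|_{H^2}^2/\|g\|_{H^1}^2$.

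On $\mathbb{R}^2$ I would start from Fourier inversion, $\|\tilde g\|_{L^\infty}\le c\|\widehat{\tilde g}\|_{L^1}$, and split the frequency integral at a radius $N>0$ to be chosen. On $\{|\xi|\le N\}$ I apply Cauchy--Schwarz against the weight $(1+|\xi|^2)^{-1}$; the resulting weight integral $\int_{|\xi|\le N}(1+|\xi|^2)^{-1}\,d\xi$ equals $\pi\ln(1+N^2)$, so this piece is controlled by $c(\ln(1+N^2))^{1/2}\|\tilde g\|_{H^1}$. On $\{|\xi|>N\}$ I use the weight $(1+|\xi|^2)^{-2}$, whose integral is $O(N^{-2})$, yielding the bound $cN^{-1}\|\tilde g\|_{H^2}$. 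Collecting the two pieces gives $\|\tilde g\|_{L^\infty}\le c\big[(\ln(1+N^2))^{1/2}\|\tilde g\|_{H^1}+N^{-1}\|\tilde g\|_{H^2}\big]$.

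The final step is to optimize by choosing $N=\|\tilde g\|_{H^2}/\|\tilde g\|_{H^1}$, which satisfies $N\ge1$ since the $H^2$ norm dominates the $H^1$ norm. This balances the second term into $c\|\tilde g\|_{H^1}$ and turns the first into $c\|\tilde g\|_{H^1}(\ln(1+\|\tilde g\|_{H^2}^2/\|\tilde g\|_{H^1}^2))^{1/2}$; using $\ln(1+x)\le 1+\ln x$ and $1\le(1+\ln x)^{1/2}$ for $x\ge1$ to absorb the residual $+1$ term, I obtain $\|\tilde g\|_{L^\infty}\le c\|\tilde g\|_{H^1}(1+\ln(\|\tilde g\|_{H^2}^2/\|\tilde g\|_{H^1}^2))^{1/2}$. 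Translating back through the extension — replacing the $\mathbb{R}^2$ norms by the $\Omega$ norms at the cost of the constant $c(\Omega)$, and folding the comparability of the logarithm's argument into the factor $1+\ln(\cdot)$ as above — gives the claimed inequality.

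I do not expect a serious obstacle, since this is a classical logarithmic Sobolev inequality (one could alternatively expand $g$ in the eigenfunctions of the Laplacian on $\Omega$ and argue spectrally instead of extending). The only points needing care are the bookkeeping of constants through the extension, so that both the argument of the logarithm and the overall constant remain controlled by $\Omega$ alone, and the verification that the $+1$ term together with the extension constants can indeed be absorbed into $(1+\ln(\cdot))^{1/2}$ — which works precisely because $\|g\|_{H^2}^2/\|g\|_{H^1}^2\ge1$ keeps the logarithm nonnegative.
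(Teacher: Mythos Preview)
Your proof is correct; it is precisely the classical Br\'ezis--Gallouet argument (extension to $\mathbb{R}^2$ plus frequency splitting at radius $N$ and optimization in $N$). The paper itself does not prove Lemma~2.4 at all: it merely states the inequality and cites \cite{bo1}, so your write-up actually supplies more than the paper does, and by the same route as the cited source.
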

\section{Well-posedness of (\ref{1.3})}\label{se3}
In this section, we are devoted to proving the global existence of weak solution $(n=2,3)$ and  strong solution $(n=2)$ to (\ref{1.2})-(\ref{1.3}).
\subsection{Existence of weak solution}\label{se3.1}
In order to  prove the global existence of weak solution to (\ref{1.2})-(\ref{1.3}), we will use a  semi-Galerkin approximation scheme similar to
\cite{bo18} with some necessary modifications. Precisely,  we will use the usual Faedo-Galerkin method only for the velocity field $u$. Let the family
$\{\xi_i\}_{i=1}^{\infty}$  be a  basis of $V$, which is given by eigenfunction of  the Stokes problem:
\begin{equation}\label{3.01}
\Delta\xi_i+\nabla p_i=-\lambda_i\xi_i,\quad \xi_i|_{\Gamma}=0,
\end{equation}
where $\lambda_i$ is the  eigenvalue corresponding to $\xi_i$. Here,  $0<\lambda_1\leq\lambda_2\leq\cdots\leq\lambda_i\leq\cdots$, with
$\lambda_i\rightarrow\infty$ as $i\rightarrow\infty.$  For every $m\in \mathbb{N}$, we denote by $V^m=span\{\xi_1,\cdots,\xi_m\}$ be the finite
dimensional subspace of $V$. At this stage, for any $m\in \mathbb{N}$ and $T>0$, we consider the following approximate problem:
\begin{equation}\label{3.1}
\begin{cases}
\partial_t u_m-\Delta u_m+u_m\cdot\nabla u_m-b_m\cdot\nabla b_m+\nabla p_m=0 &\text{in}~    Q_T,\\
\partial_t b_m-\Delta b_m+u_m\cdot\nabla b_m-b_m\cdot\nabla u_m=0 &\text{in}~ Q_T,\\
\textrm{div}~u_m=\textrm{div}~b_m=0 &\text{in}~ Q_T,\\
u_m(0)=u_{0m}:= P_m u_0,~b_m(x,0)=b_0 &\text{in}~ \Omega,\\
u_m(x,t)=0,~b_m(x,t)=h(x,t)&\text{on}~ \Gamma_T.
\end{cases}
\end{equation}
Here, $P_m$ denotes the orthogonal projection from $H$ onto $V^m$.

Before proving the global existence of weak solutions to (\ref{3.1}), we propose to prove the local time  existence of $(u_m,b_m)$.
\begin{Proposition}\label{po3.1}
Let all assumptions in Theorem \ref{th1.1} be verified. For every $m\in \mathbb{N}$, there is a time $T_1\in (0,T]$ depending on $u_0$, $b_0$,
$m$ and $\Omega$ such that (\ref{3.1}) admits a unique  solution $(u_m,b_m)$ on $[0,T_1]$ satisfying
\begin{equation}\label{3.2}
u_m\in H^1([0,T_1];V^m),\quad b_m\in L^{\infty}([0,T_1]; L^2(\Omega))\cap L^2([0,T_1];H^1(\Omega)).
\end{equation}
\end{Proposition}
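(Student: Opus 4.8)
The plan is to set up a fixed-point argument for the velocity coefficients, exploiting that $u_m$ is confined to the fixed finite-dimensional space $V^m$ while $b_m$ is recovered from a genuinely infinite-dimensional but \emph{linear} parabolic problem. Writing $u_m(t)=\sum_{i=1}^m g_i(t)\xi_i$, I would first record the weak formulation: testing the first equation of (\ref{3.1}) against each $\xi_i$ and using the eigenrelation $(\nabla\xi_i,\nabla\xi_j)=\lambda_i\delta_{ij}$ turns the $u$-equation into the scalar ODE system $\dot g_i+\lambda_i g_i=(b_m\cdot\nabla b_m,\xi_i)-(u_m\cdot\nabla u_m,\xi_i)$ with $g_i(0)=(u_0,\xi_i)$, the pressure gradient dropping out since $\xi_i\in V$. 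To handle the inhomogeneous, time-dependent Dirichlet datum for $b$, I would split $b_m=\beta_m+h_p$ with $h_p$ the heat extension from Lemma \ref{le2.3}, so that $\beta_m$ carries homogeneous boundary data and zero initial data and solves a linear parabolic equation driven by $u_m$ and $h_p$.

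Next, fix $R>0$ and $T_1\in(0,T]$ and work on the closed ball $B_R=\{w\in C([0,T_1];V^m):w(0)=u_{0m},\ \sup_t\|w\|_{V}\le R\}$. Given $w\in B_R$, I would (i) solve the linear parabolic problem for $b[w]$ with advection and stretching coefficients frozen at $w$; since $w,\nabla w\in L^\infty(Q_{T_1})$ for elements of the fixed space $V^m$, standard linear parabolic theory (in the spirit of Lemma \ref{le2.3}) yields a unique $b[w]\in L^\infty([0,T_1];L^2)\cap L^2([0,T_1];H^1)$ whose norm is bounded in terms of $R$, $m$ and the data; and (ii) define $\mathcal F(w):=u_m$ as the unique solution of the linear ODE system obtained by \emph{freezing all nonlinearities at $w$}, i.e. $\dot g_i+\lambda_i g_i=(b[w]\cdot\nabla b[w],\xi_i)-(w\cdot\nabla w,\xi_i)$, which is solved explicitly by Duhamel's formula. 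A fixed point $w=\mathcal F(w)$ then solves (\ref{3.1}) in $V^m$ together with $b_m=b[w]$. To see the forcing is admissible I would integrate by parts, $(b\cdot\nabla b,\xi_i)=-(b\otimes b,\nabla\xi_i)$, which is licit because $\operatorname{div}b_m=0$ is preserved by the $b$-equation under (\ref{1.01}) and $\xi_i|_\Gamma=0$; this bounds the forcing by $\|b\|_{L^2}^2\|\nabla\xi_i\|_{L^\infty}$ and keeps it in $L^\infty_t$.

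The two remaining points are the self-mapping property and the contraction estimate, both obtained by shrinking $T_1$. For self-mapping, an $L^2$ energy estimate on the $u$-ODE together with the bound on $\|b[w]\|$ gives $\sup_t\|\mathcal F(w)\|_{L^2}\le\|u_{0m}\|_{L^2}+C(m,R,\text{data})\,T_1$, and since all norms on the fixed space $V^m$ are equivalent (with $m$-dependent constants, which is harmless because $T_1$ may depend on $m$) this upgrades to a $V$-bound, so $\mathcal F(B_R)\subset B_R$ once $R$ is fixed large relative to the data and $T_1$ is small. For the contraction, given $w_1,w_2\in B_R$ I would first estimate $\delta b:=b[w_1]-b[w_2]$, which solves the frozen linear parabolic equation with zero initial and boundary data and right-hand side $-(w_1-w_2)\cdot\nabla b[w_2]+b[w_2]\cdot\nabla(w_1-w_2)$; a Gronwall energy argument yields $\|\delta b\|_{L^\infty(L^2)\cap L^2(H^1)}\le C(m,R)\,\|w_1-w_2\|_{C([0,T_1];V)}$. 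Feeding this into the Duhamel representation of $\mathcal F(w_1)-\mathcal F(w_2)$ produces an overall factor $T_1$, so $\mathcal F$ is a contraction on $B_R$ for $T_1$ small enough; the Banach fixed-point theorem then delivers existence and uniqueness of $(u_m,b_m)$ with the regularity (\ref{3.2}), the time regularity $u_m\in H^1([0,T_1];V^m)$ being immediate from the ODE.

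I expect the main obstacle to be the Lipschitz dependence of the parabolic solution $b[w]$ on the frozen velocity $w$ in the presence of the time-dependent boundary datum $h$: this is precisely what forces the lifting decomposition $b_m=\beta_m+h_p$, so that the differences $\delta b$ satisfy homogeneous boundary conditions and Lemma \ref{le2.3}-type estimates apply, and it is what makes the stretching term $b_m\cdot\nabla u_m$ delicate, since it couples $\delta b$ back to gradients of $w$. Controlling this coupling relies crucially on the $L^\infty(Q_{T_1})$ bounds for $w$ and $\nabla w$ that are available only because $w$ lives in the finite-dimensional space $V^m$.
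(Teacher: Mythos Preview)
Your proposal is correct and yields the stated result, but the fixed-point strategy differs from the paper's. The paper also freezes a velocity $\bar u_m\in C([0,T];V^m)$ and solves for $b_m=b[\bar u_m]$, but it establishes existence for this linear parabolic problem by an explicit Picard iteration $(b_m^j)_{j\ge0}$, shown to be Cauchy in $L^\infty_tL^2_x$, rather than by invoking linear theory as a black box. For the velocity step the paper keeps the advection in the semi-linear form $\bar u_m\cdot\nabla u_m$ (so the $u$-equation is still linear in $u_m$), solves the resulting ODE, and then applies \emph{Schauder's} fixed-point theorem to the composed map $\bar u_m\mapsto b_m\mapsto u_m$, using the compact embedding $H^1([0,T_1];V^m)\hookrightarrow C([0,T_1];V^m)$; uniqueness is argued separately afterwards by a difference estimate. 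Your fully-frozen map together with a Banach contraction avoids the explicit iteration for $b_m$ and delivers uniqueness automatically, at the price of having to prove the Lipschitz dependence $w\mapsto b[w]$ (the Schauder route requires only continuity of this map). Both arguments rest on exactly the two devices you single out as the crux: the lifting $b_m=\beta_m+h_p$ to reduce to homogeneous boundary data, and the finite-dimensionality of $V^m$, which supplies the $L^\infty$ control of $w$ and $\nabla w$ that makes the frozen $b$-equation well-posed and its solution map controllable.
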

\begin{proof}
We start by choosing an arbitrary vector $\bar{u}_m\in C([0,T]; V^m)$ such that
\begin{equation}\label{3.3}
\sup_{t\in [0,T]}\|\bar{u}_m\|_{L^2}^2\leq M,
\end{equation}
where $M>0$ be a constant satisfying $\|u_0\|_{L^2}^2\leq \frac M2$.

We will now apply a fixed point argument to prove the existence of weak solution $(u_m,b_m)$ to (\ref{3.1}). First, we consider the following splitting
\begin{description}
\item[(i)] Let $\bar{u}_m\in C([0,T]; V^m)$ be a given velocity field, we look for $b_m \in L^{\infty}([0,T];L^2(\Omega))\cap L^2([0,T];H^1(\Omega))$
be the solution to the following problem:
\begin{equation}\label{3.4}
\begin{cases}
\partial_t b_m-\Delta b_m+\bar{u}_m\cdot\nabla b_m-b_m\cdot\nabla \bar{u}_m=0 &\text{in}~ Q_T,\\
\textrm{div}~b_m=0&\text{in}~ Q_T,\\
b_m(x,0)=b_0 &\text{in}~ \Omega,\\
b_m(x,t)=h(x,t) &\text{on}~ \Gamma_T.
\end{cases}
\end{equation}
\item[(ii)] Let $b_m\in L^{\infty}([0,T];L^2(\Omega))\cap L^2([0,T];H^1(\Omega))$ be the magnetic field just determined by (\ref{3.4}), we turn to look
for $u_m=\sum_{i=1}^mg_i^m(t)\xi_i(x)$  which solves the following problem:
\begin{equation}\label{3.5}
\begin{cases}
\partial_t u_m-\Delta u_m+\bar{u}_m\cdot\nabla u_m-b_m\cdot\nabla b_m+\nabla p_m=0 &\text{in}~  Q_T,\\
u_m(x,0)=u_{0m} &\text{in}~   \Omega,\\
u_m(x,t)=0   &\text{in}~\Gamma_T.
\end{cases}
\end{equation}
\end{description}
In what follows, for simplicity, we divide the proof into several steps.

\textbf{Step 1.} Existence and uniqueness for (\ref{3.1}). Define $\hat{b}_m:=b_m-h_p$ and $h_p$ is the solution of the lifting problem (\ref{2.2}), then
(\ref{3.4}) can be rewritten as
\begin{equation}\label{3.6}
\begin{cases}
\partial_t \hat{b}_m-\Delta \hat{b}_m+\bar{u}_m\cdot\nabla b_m-b_m\cdot\nabla \bar{u}_m=0 &\text{in}~ Q_T,\\
\hat{b}_m(x,0)=0 &\text{in}~ \Omega,\\
\hat{b}_m(x,t)=0 &\text{on}~ \Gamma_T.
\end{cases}
\end{equation}
By energy estimate, for (\ref{3.6})$_1$ on $\hat{b}_m$, we have
\begin{align}\label{3.7}
\frac12\frac{d}{dt}\|\hat{b}_m\|_{L^2}^2+ \|\nabla\hat{b}_m\|_{L^2}^2=&\int_{\Omega} [(\hat{b}_m+h_p)\cdot\nabla\bar{u}_m]\cdot\hat{b}_mdx
-\int_{\Omega} [\bar{u}_m\cdot\nabla(\hat{b}_m+h_p)]\cdot\hat{b}_mdx\nonumber  \\
\leq&\|\hat{b}_m\|_{L^4}^2\|\nabla\bar{u}_m\|_{L^2}+\|\nabla\bar{u}_m\|_{L^2}\|\hat{b}_m\|_{L^4}\|h_p\|_{L^4}
 +\|\bar{u}_m\|_{L^4}\|\hat{b}_m\|_{L^4}\|\nabla h_p\|_{L^2}\nonumber  \\
\leq&c\|\hat{b}_m\|_{L^2}^{2\theta}\|\nabla\hat{b}_m\|_{L^2}^{2(1-\theta)}\|\nabla\bar{u}_m\|_{L^2}
+c\|\nabla\bar{u}_m\|_{L^2}\|\nabla\hat{b}_m\|_{L^2}\|h_p\|_{H^1}\nonumber  \\
\leq&cM^q\|\hat{b}_m\|_{L^2}^2+cM^2\|h\|^2_{H^{\frac12}(\Gamma)}+\frac12\|\nabla\hat{b}_m\|_{L^2}^2,
\end{align}
where $\theta=\frac 12$, $q=2$ when $n=2$ and $\theta=\frac 14$, $q=4$ when $n=3$.

Applying the Gronwall's inequality, then (\ref{3.7}) implies that for any $t\in [0,T]$
\begin{equation}\label{3.8}
\|\hat{b}_m(t)\|_{L^2}^2+\int_0^t\|\nabla\hat{b}_m(\tau)\|_{L^2}^2d\tau\leq cM^2(cM^qt+1)e^{cM^qt}\|h\|^2_{L^2_tH_x^{\frac12}(\Gamma_t)}.
\end{equation}
With the help of the a priori estimate above, now, we proceed to prove the local existence of solution $b_m$ to (\ref{3.4}). First, we construct the
solution sequence $(b_m^j)_{j\geq0}$ by solving iteratively the following scheme for $j\geq0$:
\begin{equation}\label{3.9}
\begin{cases}
\partial_t b_{m}^{j+1}-\Delta b_{m}^{j+1}+\bar{u}_m\cdot\nabla b_{m}^{j+1}-b_{m}^{j}\cdot\nabla\bar{u}_m=0 &\text{in}~Q_T,\\
\textrm{div}~ b_{m}^{j+1}= \textrm{div}~ b_{m}^{j}=0&\text{in}~Q_T,\\
b_{m}^{j+1}(x,0)=b_0 &\text{in}~\Omega,\\
b_{m}^{j+1}(x,t)=h &\text{on}~\Gamma_T,
\end{cases}
\end{equation}
where $b_m^0=0$ is set at initial step. Without loss of generality, taking $T_0$ suitable small, by induction we shall prove that there are constants
$\tilde{M}>0$ depending on $b_0$, $h$ and $m$ such that
\begin{equation}\label{3.10}
\|b_{m}^j(t)\|_{L^2}^2+\int_0^t\|\nabla b_{m}^j(\tau)\|_{L^2}^2d\tau\leq \tilde{M},
\end{equation}
for all $t\in [0,T_0]$. In fact, suppose that (\ref{3.10}) is true for some $j\geq0$, then similar to (\ref{3.7})-(\ref{3.8}), we can see that
\begin{align}\label{3.11}
\frac12\frac{d}{dt}\|\hat{b}_{m}^{j+1}\|_{L^2}^2+\|\nabla \hat{b}_{m}^{j+1}\|_{L^2}^2
=&-\int_{\Omega}(b_{m}^j\otimes \bar{u}_m)\cdot\nabla \hat{b}_{m}^{j+1}dx
-\int_{\Omega}(\bar{u}_m\cdot\nabla b_{m}^{j+1})\cdot \hat{b}_{m}^{j+1}dx\nonumber\\
\leq& c\|b_{m}^j\|_{L^2}^2\|\bar{u}_m\|_{L^{\infty}}^2
+c\|\nabla\bar{u}_m\|_{L^2}^2\|h_p\|_{H^1}^2+\frac12\|\nabla \hat{b}_{m}^{j+1}\|_{L^2}^2\nonumber\\
\leq& c\|b_{m}^j\|_{L^2}^2\|\bar{u}_m\|_{L^2}^2
+c\|\nabla\bar{u}_m\|_{L^2}^2\|h\|_{H^{\frac12}(\Gamma)}^2+\frac12\|\nabla \hat{b}_{m}^{j+1}\|_{L^2}^2
\end{align}
where $\hat{b}_m^j=b_m^j-h_p$,  and in the last inequality, we have taken into account that
$\|\bar{u}_m\|_{L^{\infty}}\leq c(m)\|\bar{u}_m\|_{L^2}$, since $V^m$ is finite dimensional. Hence, from (\ref{3.11}),  it follows that
\begin{equation*}
\|\hat{b}_{m}^{j+1}(t)\|_{L^2}^2+c\int_0^t\|\nabla\hat{b}_{m}^{j+1}(\tau)\|_{L^2}^2d\tau
\leq c\tilde{M}\|\bar{u}_m\|_{L_t^{\infty}L_x^2(Q_T)}t
+c\|\nabla\bar{u}_m\|_{L_t^{\infty}L_x^2(Q_T)}\int_0^t\|h(\tau)\|_{H^{\frac12}(\Gamma)}^2d\tau,
\end{equation*}
for all $t\in [0,T_0]$. This together with (\ref{1.6}) implies that, for proper small constant $T_0>0$ and large constant $\tilde{M}>0$, (\ref{3.10})
is true for $j+1$ and hence it holds for all $j\geq0$.

Next, we shall show the convergence of the sequence $(b^j)_{j\geq0}$. By taking the difference of (\ref{3.9}) for $j$ and $j+1$, we have for $j\geq1$
\begin{equation}\label{3.12}
\begin{cases}
\partial_t\bar{b}_{m}^{j+1}-\Delta \bar{b}_{m}^{j+1}+\bar{u}_m\cdot\nabla\bar{b}_{m}^{j+1}-\bar{b}_{m}^{j}\cdot\nabla\bar{u}_m=0 &\text{in}~Q_T,\\
{\rm div}~\bar{b}_{m}^{j}={\rm div}~\bar{b}_{m}^{j+1}=0&\text{in}~Q_T,\\
\bar{b}_{m}^{j+1}(x,0)=0 &\text{in}~\Omega,\\
\bar{b}_{m}^{j+1}(x,t)=0 &\text{on}~ \Gamma_T,
\end{cases}
\end{equation}
where $\bar{b}_{m}^{j}=\bar{b}_{m}^{j}-\bar{b}_{m}^{j-1}$.

Employing the energy estimate, for (\ref{3.12}) on $\bar{b}_{m}^{j+1}$, we arrive at
\begin{align*}
\frac12\frac{d}{dt}\|\bar{b}_{m}^{j+1}\|^2_{L^2}+\|\nabla\bar{b}_{m}^{j+1}\|^2_{L^2}
&\leq\frac12\|\nabla{b}_{m}^{j+1}\|^2_{L^2}+\frac12\int_{\Omega}|\bar{b}_{m}^j|^2|\bar{u}_m|^2dx \\
&\leq\frac12\|\nabla{b}_{m}^{j+1}\|^2_{L^2}+c\|\bar{u}_m\|^2_{L^{\infty}}\|\bar{b}_{m}^j\|^2_{L^2} \\
&\leq\frac12\|\nabla{b}_{m}^{j+1}\|^2_{L^2}+c(m)\|\bar{u}_m\|^2_{L^{2}}\|\bar{b}_{m}^j\|^2_{L^2}.
\end{align*}
The further time integration gives
\begin{equation}\label{3.13}
\|\bar{b}_{m}^{j+1}(t)\|^2_{L^2}+\int_0^t\|\nabla\bar{b}_{m}^{j+1}(\tau)\|^2_{L^2}d\tau\leq c\|\bar{u}_m\|^2_{L_t^{\infty}L^{2}_x(Q_T)}
\sup_{t\in[0,T_0]}\|\bar{b}_{m}^{j}(t)\|^2_{L^2}T_0,
\end{equation}
for all $t\in [0,T_0]$. By the smallness of $T_0$, we can see that there exists a constant $\lambda\in (0,1)$ such that
\begin{equation}\label{3.14}
\sup_{t\in[0,T_0]}\|\bar{b}_{m}^{j+1}(t)\|^2_{L^2}\leq\lambda\sup_{t\in[0,T_0]}\|\bar{b}_{m}^{j}(t)\|^2_{L^2}
\end{equation}
for any $j\geq1$. This, together with (\ref{3.10}) implies that $(b^j_m)_{j\geq0}$ is a Cauchy sequence in the Banach space
$L^{\infty}([0,T_0];L^2(\Omega))$. Thus, taking into account the a priori estimate (\ref{3.8}), we infer that the limit function
\begin{equation*}
b_m=b_m^0+\lim_{n\longrightarrow\infty}\sum_{j=0}^n(b_m^{j+1}-b_m^{j})
\end{equation*}
indeed exists in $L_t^{\infty}L_x^2(Q_{T_0})\cap L_t^2H_x^1(Q_{T_0})$.

Furthermore, suppose that $b_m$ and $b_m'$ are two solutions in $L_t^{\infty}L_x^2(Q_{T_0})\cap L_t^2H_x^1(Q_{T_0})$. By the same process as in
(\ref{3.14}) to prove the convergence of $(b^j_m)_{j\geq0}$, we have
\begin{equation*}
\sup_{0\leq t\leq T_0}\|b_m(t)-b'_m(t)\|_{L^2}\leq\lambda_1\sup_{0\leq t\leq T_0}\|b_m(t)-b'_m(t)\|_{L^2},
\end{equation*}
with $\lambda_1\in(0,1)$, which implies $b_m=b_m'$ holds. This proves the uniqueness of weak solution $b_m$ to (\ref{3.4}).

In addition, one can easily prove that the solution $b_m$ to (\ref{3.4}) is continuously depends on initial-boundary data as well as the given velocity
field $\bar{u}_m$. Hence, the solution operator defined by (\ref{3.4}) $\Psi_{b_m}:C([0,T_0];V^m)\longrightarrow
L_t^{\infty}L_x^2(Q_{T_0})\cap L_t^2H_x^1(Q_{T_0})$, $\Psi_{b_m}: \bar{u}_m\longmapsto b_m$ is continuous.

\textbf{Step 2.} Existence and uniqueness for (\ref{3.5}). Once the solution $b_m$ is determined in (\ref{3.4}), now, we proceed to prove the existence of
$u_m(x,t)=\sum_{i=1}^mg_i^m(t)\xi_i(x)$ to (\ref{3.5}). Multiplying (\ref{3.5}) by $\xi_i(x)$, then we obtain a nonlinear ordinary equations for
$g_i^m(t)$. By the argument of ODE, we can derive the existence and uniqueness of local solution $g_i^m(t)$ on $[0,T_0']$ such that
\begin{equation}\label{3.014}
u_m=\sum_{i=1}^mg_i^m(t)\xi_i(x)\in H^1([0,T_0'];V^m),
\end{equation}
where $T_0'\in [0,T_0]$ may depend on $u_0$, $b_m$ and $m$.

Furthermore, by energy estimate of (\ref{3.5}), we infer that
\begin{align*}
\frac12\frac{d}{dt}\|u_m\|^2_{L^2}+\|\nabla u_m\|^2_{L^2}& \leq \|u_m\|_{L^{\infty}} \|b_m\|_{L^2}\|\nabla b_m\|_{L^2}\\
  &\leq c(m)\|u_m\|_{L^2}\|b_m\|_{L^2}\|\nabla b_m\|_{L^2} \\
&\leq c(m)\|b_m\|_{L^2}^2\|\nabla b_m\|_{L^2}^2+\frac12\|\nabla u_m\|_{L^2}^2,
\end{align*}
This, combined with (\ref{3.8})  implies that for all $t\in [0,T_0']$
\begin{align}\label{3.15}
&\sup_{t\in [0,T_0']}\|u_m(t)\|^2_{L^2}+\int_0^t\|\nabla u_m(\tau)\|^2_{L^2}d\tau\nonumber\\
&\leq \|u_{0m}\|^2_{L^2}+c\left[M^2(cM^qt+1)e^{cM^qt}\right]^2\|h\|^4_{L^2_tH_x^{\frac12}(\Gamma_t)}
+c_1(\|\partial_th\|_{L^2_tH_x^{-\frac12}(\Gamma_t)},\|h\|_{L^2_tH_x^{\frac12}(\Gamma_t)}),
\end{align}
where $c_1(\cdot)\searrow 0$ as $t\longrightarrow 0$.
Moreover, by the ODE arguments, we can prove that the unique local solution $u_m$ to (\ref{3.5}) continuously depends on its initial data and the given
function $b_m$. Hence, we conclude that the solution operator defined by (\ref{3.5})
$\Psi_{u_m}:L^{\infty}([0,T_0];L^2(\Omega))\cap L^2([0,T_0];H^1(\Omega))\longrightarrow H^1([0,T_0'];V^m)$,
$\Psi_{u_m}(b_m)=u_m$ is continuous.

\textbf{Step 3.} Existence and uniqueness for (\ref{3.1}). Set $T_1:=T_0'$, from the conclusion above, one can see that the mapping for all
$m\in \mathbb{N}$
\begin{equation*}
\Psi_{u_m}\circ\Psi_{b_m}:~C([0,T_1];V^m)\longrightarrow  H^1([0,T_1];V^m), \quad \Psi_{u_m}\circ\Psi_{b_m}(\bar{u}_m)=u_m
\end{equation*}
is continuous, where $u_m$ is the solution to (\ref{3.5}). By the Rellich theorem and the finite dimensionality of $V^m$, we infer that
$\Psi_{u_m}\circ\Psi_{b_m}$ is a compact from $C([0,T_1];V^m)$ into itself. Moreover, form (\ref{3.15}), it follows that
\begin{equation}\label{3.16}
\sup_{t\in [0,T_1]}\|u_m(t)\|^2_{L^2}\leq\frac M2+c\left[M^2(cM^qt+1)e^{cM^qt}\right]^2\|h\|^4_{L^2_tH_x^{\frac12}(\Gamma_t)}
+c_1(\|\partial_th\|_{L^2_tH_x^{-\frac12}(\Gamma_t)},\|h\|_{L^2_tH_x^{\frac12}(\Gamma_t)}),
\end{equation}
which easily yields that for suitable small $T_1$  there holds $\|u_m(t)\|^2_{L^2}\leq M$, for all $t\in [0,T_1]$. Hence, employing Schauder's fixed
point theorem, we conclude that there exists at least one fixed point $u_m$ to (\ref{3.1}) in the bounded closed convex set
\begin{equation*}
\left\{u_m\in C([0,T_1];V_m):~\sup_{t\in [0,T_1]}\|u_m(t)\|^2_{L^2}\leq M~\text{with}~u_m(0)=P_mu_0\right\}
\end{equation*}
such that (\ref{3.2}) holds. Finally, similar to
(\ref{3.12})-(\ref{3.14}), we can deduce the uniqueness of the approximate solutions $(u_m,b_m)$ to (\ref{3.8}). This completes the proof
of Proposition \ref{po3.1}.
\end{proof}
Now we give the definition of weak solutions to (\ref{3.1}).
\begin{Definition}\label{de3.1}
We say $(u_m,b_m)$ is a weak solution to (\ref{3.1}) on $Q_T$, if
\begin{equation*}
(u_m,b_m)\in L^{\infty}([0,T];H\times L^2(\Omega))\cap L^2([0,T]; V\times H^1(\Omega)),
\end{equation*}
and (\ref{3.1})$_1$-(\ref{3.1})$_2$ are valid in the weak sense.
\end{Definition}
As a consequence, from   Proposition \ref{po3.1} and Definition \ref{de3.1}, it follows that
\begin{equation*}
(u_m,b_m)\in  L^{\infty}([0,T_1];H\times L^2(\Omega))\cap L^2([0,T_1];V\times H^1(\Omega))
\end{equation*}
is a weak solution to (\ref{3.1}) on $Q_{T_1}$. Next, we propose to extend the time interval of the existence of weak solutions. Precisely, we have:
\begin{Lemma}\label{le3.1}
Let all assumptions in Theorem \ref{th1.1} be in force. Then for any $m>0$,
 the problem (\ref{3.1}) admits a unique weak solution $(u_m,b_m)$ on $Q_T$.
\end{Lemma}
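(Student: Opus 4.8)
The plan is to upgrade the local solution furnished by Proposition \ref{po3.1} to a global one on $Q_T$ by a continuation argument driven by a time-uniform a priori estimate. The key structural fact is that the local existence time $T_1$ in Proposition \ref{po3.1} depends only on $\|u_0\|_{L^2}$, $\|b_0\|_{L^2}$, $m$ and $\Omega$. Consequently, once I know that $\|u_m(t)\|_{L^2}^2+\|b_m(t)\|_{L^2}^2$ remains bounded on every interval of existence, I may restart the fixed-point scheme of Proposition \ref{po3.1} at $t=T_1$ with data $(u_m(T_1),b_m(T_1))\in V^m\times L^2(\Omega)$ and gain a further step whose length is bounded below by a fixed $\delta>0$ determined only by that uniform bound. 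Iterating finitely many times covers $[0,T]$, and uniqueness on $Q_T$ follows by patching the local uniqueness already established in Proposition \ref{po3.1}. Thus everything reduces to producing an a priori bound that is finite on all of $[0,T]$ and, in particular, does not degenerate as the existence interval grows.

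To obtain that bound I would derive the energy inequality (\ref{3.20}). Set $w_m:=b_m-h_E$, where $h_E$ solves the harmonic lifting (\ref{2.1}), so that $w_m$ vanishes on $\Gamma_T$. Testing (\ref{3.1})$_1$ with $u_m$ (admissible since $u_m\in V^m$) and (\ref{3.1})$_2$ with $w_m$, then adding, I exploit three cancellations. First, the convective terms $\int(u_m\cdot\nabla u_m)\cdot u_m$ and $\int(u_m\cdot\nabla b_m)\cdot b_m$ vanish by $\mathrm{div}\,u_m=0$ and $u_m|_\Gamma=0$. Second, the two coupling terms cancel because
\begin{equation*}
\int_\Omega(b_m\cdot\nabla b_m)\cdot u_m\,dx+\int_\Omega(b_m\cdot\nabla u_m)\cdot b_m\,dx=\int_\Omega b_m\cdot\nabla(b_m\cdot u_m)\,dx=0,
\end{equation*}
again using $\mathrm{div}\,b_m=0$ and $u_m|_\Gamma=0$. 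Third, since $h_E$ is harmonic and $w_m|_\Gamma=0$, the cross term $\int_\Omega\nabla h_E:\nabla w_m\,dx$ vanishes as well. What survives is the clean identity
\begin{equation*}
\tfrac12\tfrac{d}{dt}\big(\|u_m\|_{L^2}^2+\|w_m\|_{L^2}^2\big)+\|\nabla u_m\|_{L^2}^2+\|\nabla w_m\|_{L^2}^2=-\int_\Omega\partial_t h_E\cdot w_m\,dx+\int_\Omega(u_m\cdot\nabla b_m)\cdot h_E\,dx-\int_\Omega(b_m\cdot\nabla u_m)\cdot h_E\,dx,
\end{equation*}
whose right-hand side is linear in the lifting data only.

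Next I would estimate these three terms and close the inequality by Gronwall. The first is handled by Young's inequality together with $\int_0^t\|\partial_t h_E\|_{L^2}^2\,d\tau\le c\int_0^t\|\partial_t h\|_{H^{-1/2}(\Gamma)}^2\,d\tau$ from Lemma \ref{le2.2}, which is finite by (\ref{1.6}). For the two nonlinear lifting terms I would apply the Ladyzhenskaya--Gagliardo--Nirenberg interpolation $\|f\|_{L^4}\le c\|f\|_{L^2}^{1-n/4}\|\nabla f\|_{L^2}^{n/4}$ to $w_m$, split off a small multiple of $\|\nabla u_m\|_{L^2}^2+\|\nabla w_m\|_{L^2}^2$ to be absorbed into the dissipation, and reduce the remainder to a term of the form $c\,\|h_E\|_{H^1}^{q_n}\big(\|u_m\|_{L^2}^2+\|w_m\|_{L^2}^2\big)$ plus lower-order contributions. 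Since the elliptic estimate underlying Lemma \ref{le2.2} holds pointwise in time, $\|h_E(t)\|_{H^1}\le c\|h(t)\|_{H^{1/2}(\Gamma)}$, and hence $\|h_E\|_{H^1}^{q_n}\in L^1([0,T])$ precisely under the hypothesis $h\in L^{q_n}([0,T];H^{1/2}(\Gamma))$ of (\ref{1.6}). Gronwall then yields
\begin{equation*}
\sup_{t\in[0,T]}\big(\|u_m\|_{L^2}^2+\|b_m\|_{L^2}^2\big)+\int_0^T\big(\|\nabla u_m\|_{L^2}^2+\|\nabla b_m\|_{L^2}^2\big)\,dt\le C,
\end{equation*}
with $C$ depending on the data and $T$ but finite and independent of the length of the existence interval (indeed of $m$ as well). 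Feeding this into the continuation scheme of the first paragraph extends $(u_m,b_m)$ to all of $Q_T$ and finishes the proof.

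I expect the main obstacle to be the treatment of the two nonlinear lifting terms: one must match the time integrability of $\|h_E\|_{H^1}$ (hence of $h$ on $\Gamma$) against the powers produced by the interpolation, so that the Gronwall forcing lies in $L^1([0,T])$. This matching is exactly what forces the dimension-dependent exponent $q_n$ ($q_2=4$, $q_3=8$), and it is the only place where the integrability part of the hypothesis (\ref{1.6}) is used sharply; the remaining bookkeeping (the cancellations and the absorption of gradient terms) is routine once the splitting by $w_m=b_m-h_E$ is in place.
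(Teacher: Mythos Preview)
Your proposal is correct and follows essentially the same approach as the paper: subtract the harmonic lifting $h_E$, exploit the cancellations (including $\int_\Omega\nabla h_E:\nabla w_m\,dx=0$ from harmonicity), estimate the surviving lifting terms via the Ladyzhenskaya--type interpolation so that the Gronwall coefficient is $c\|h\|_{H^{1/2}(\Gamma)}^{q_n}\in L^1([0,T])$ under (\ref{1.6}), and then run a continuation argument based on the fact that the local time $T_1$ depends only on the $L^2$ sizes of the data. The only cosmetic difference is that the paper first rewrites the system in terms of $\tilde b_m=b_m-h_E$ (so the harmonicity is absorbed into the equation before testing), whereas you test first and then invoke harmonicity; the resulting identity and estimates are the same as the paper's (\ref{3.18})--(\ref{3.21}).
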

\begin{proof}
First, we set $\tilde{b}_m:=b_m-h_E$, then (\ref{3.1}) can be rewritten as
\begin{equation}\label{3.17}
\begin{cases}
\partial_t u_m-\Delta u_m+u_m\cdot\nabla u_m-b_m\cdot\nabla b_m+\nabla p_m=0 &\text{in}~ Q_T,\\
\partial_t \tilde{b}_m-\Delta\tilde{b}_m+u_m\cdot\nabla b_m-b_m\cdot\nabla u_m+\partial_t h_E=0 &\text{in}~ Q_T,\\
\textrm{div}u_m=0&\text{in}~ Q_T,\\
u_m(0)=u_{0m},~\tilde{b}_m(x,0)=b_0-h_E(0)  &\text{in}~\Omega,\\
u_m=\tilde{b}_m(x,t)=0 &\text{on}~\Gamma_T.
\end{cases}
\end{equation}
We choose $u_m$ and $\tilde{b}_m$ as test functions in (\ref{3.17}), then by energy estimate of (\ref{3.17})$_1$--(\ref{3.17})$_2$, one has
\begin{align*}
\frac{1}{2}\frac{d}{dt}(\|u_m\|_{L^2}^{2}+\|\tilde{b}_m\|_{L^2}^{2})+\|\nabla u_m\|_{L^2}^{2}+\|\nabla\tilde{b}_m\|_{L^2}^{2}
=&\int_{\Omega}(b_m\cdot\nabla  b_m)\cdot u_mdx-\int_{\Omega}(u_m\cdot\nabla b_m)\cdot \tilde{b}_mdx\\
&+\int_{\Omega}(b_m\cdot\nabla u_m)\cdot \tilde{b}_mdx-\int_{\Omega}\partial_t h_E\cdot\tilde{b}_mdx\\
:= &I_1+I_2+I_3+I_4.
\end{align*}
By the H\"{o}lder, Young and Sobolev's inequalities, we obtain
\begin{align}\label{3.18}
I_1+I_3 &\leq\int_{\Omega}|((\tilde{b}_m+h_E)\cdot\nabla u_m)\cdot h_E| dx\nonumber\\
&\leq \|h_E\|_{L^{4}}\|\tilde{b}_m\|_{L^4}\|\nabla u_m\|_{L^2}+\|h_E\|_{L^{4}}^2\|\nabla u_m\|_{L^2}\nonumber\\
&\leq c\|h_E\|_{H^1}\|\tilde{b}_m\|_{L^2}^{\theta}\|\nabla\tilde{b}_m\|_{L^2}^{1-\theta}\|\nabla u_m\|_{L^2}
+\|h_E\|_{H^1}^2\|\nabla u_m\|_{L^2}\nonumber\\
&\leq c \|h_E\|_{H^1}^{q_n}\|\tilde{b}_m\|_{L^2}^2+\frac14\|\nabla\tilde{b}_m\|_{L^2}^2+\frac14\|\nabla u_m\|_{L^2}^2+2\|h_E\|_{H^1}^4,
\end{align}
where $\theta=\frac12$, $q_n=4$ for $n=2$ and  $\theta=\frac14$, $q_n=8$ for $n=3$.

Next, in virtue of Poincar\'{e}'s inequality, we further obtain
\begin{align}\label{3.19}
I_2+I_4 &\leq \|h_E\|_{L^{4}}\|u_m\|_{L^4}\|\nabla b_m\|_{L^2}+c\|\partial_t h_E\|_{L^2}\|\nabla\tilde{b}_m\|_{L^2}\nonumber\\
&\leq c\|h_E\|_{H^1}\|u_m\|_{L^2}^{\theta}\|\nabla u_m\|_{L^2}^{1-\theta}(\|\nabla \tilde{b}_m\|_{L^2}+\|\nabla h_E\|_{L^2})
+c\|\partial_t h_E\|_{L^2}^{2}+\frac{1}{8}\|\nabla\tilde{b}_m\|_{L^2}^{2}\nonumber\\
&\leq c\|h_E\|_{H^1}^{q_n}\|u_m\|_{L^2}^{2}+\|\nabla h_E\|_{L^2}^{2}+\frac14\|\nabla u_m\|_{L^2}^2
+\frac{1}{4}\|\nabla \tilde{b}_m\|_{L^2}^{2}+c\|\partial_t h_E\|_{L^2}^{2},
\end{align}
where $\theta$ and $q_n$ are determined in  (\ref{3.18}).

Putting these estimates together and taking into account Lemma \ref{le2.2}, there holds
\begin{align}\label{3.20}
&\frac{d}{dt}(\|u_m\|_{L^2}^{2}+\|\tilde{b}_m\|_{L^2}^{2})+\|\nabla u_m\|_{L^2}^{2}+\|\nabla\tilde{b}_m\|_{L^2}^{2}\nonumber\\
&\leq c\|h\|_{H^{\frac{1}{2}}(\Gamma)}^{q_n}(\|u_m\|_{L^2}^{2}+\|\tilde{b}_m\|_{L^2}^{2})+c(\|h\|_{H^{\frac{1}{2}}(\Gamma)}^{2}
+\|\partial_th\|_{H^{-\frac{1}{2}}(\Gamma)}^{2}+\|h\|_{H^{\frac{1}{2}}(\Gamma)}^{4}).
\end{align}
This, together with Gronwall's inequality and Proposition \ref{po3.1} implies that
\begin{align}\label{3.21}
\|u_m(t)\|_{L^2}^{2}+\|\tilde{b}_m(t)\|_{L^2}^{2}+\int_{0}^{t}(\|\nabla\tilde{b}_m\|_{L^2}^{2}
+\|\nabla u_m\|_{L^2}^{2})(\tau)d\tau&\leq(e^{\varphi(t)}\varphi(t)+1)\psi(t),
\end{align}
for all $t\in [0,T_1]$, where
\begin{align*}
\psi(t)=&\|u_0\|_{L^2}^{2}+\|b_0\|_{L^2}^{2}+c\int_{0}^{t}(\|h\|_{H^{\frac{1}{2}}(\Gamma)}^{2}
+\|\partial_th\|_{H^{-\frac{1}{2}}(\Gamma)}^{2}+\|h\|_{H^{\frac{1}{2}}(\Gamma)}^{4})d\tau\\
\varphi(t)=&c\int_{0}^{t}\|h(\tau)\|_{H^{\frac{1}{2}}(\Gamma)}^{q_n}d\tau.
\end{align*}
Now, we set
\begin{equation*}
M_T:=(e^{\varphi(T)}\varphi(T)+1)\psi(T)+c(\|h\|_{L^{2}([0,T];H^{\frac12}(\Gamma))},\|\partial_th\|_{L^{2}([0,T];H^{-\frac12}(\Gamma))})
\end{equation*}
for any $T\in (0,\infty)$. Choosing $M=2M_T$ in (\ref{3.3}), and $(u_m(T_1),b_m(T_1))$ as initial data. Similar to the proof of Proposition \ref{po3.1}
and (\ref{3.21}), we conclude that there exists a constant $\delta>0$ depends on $m$, $\Omega$, $M_T$ such that (\ref{3.1}) has a unique weak solution
$(u_m,b_m)$ on $\Omega\times [T_1,T_2]$ with $T_2=T_1+\delta$. Furthermore, $(u_m,b_m)$ satisfies
\begin{align}\label{3.021}
&\sup_{0\leq t\leq T_2}(\|u_m(t)\|_{L^2}^{2}+\|b_m(t)\|_{L^2}^{2})+\int_0^t(\|\nabla u_m(\tau)\|_{L^2}^{2}+\|\nabla b_m(\tau)\|_{L^2}^{2})d\tau
\nonumber\\
&\leq (e^{\varphi(t)}\varphi(t)+1)\psi(t)+c(\|h\|_{L^{2}_tH_x^{\frac12}(\Gamma_t)},\|\partial_th\|_{L^{2}_tH_x^{-\frac12}(\Gamma_t)})\leq M_T,
\end{align}
for all $t\in [0,T_2]$. At this stage, if $T_2=T$, we have completed the proof. If $T_2<T$, we can repeat the process as before. After iterating
$[\frac{T-T_1}{\delta}]+1$ times, then we conclude  that
\begin{equation*}
(\|u_m(t)\|_{L^2}^{2}+\|b_m(t)\|_{L^2}^{2})+\int_0^t(\|\nabla u_m(\tau)\|_{L^2}^{2}+\|\nabla b_m(\tau)\|_{L^2}^{2})d\tau\leq M_T
\end{equation*}
holds for all $t\in [0,T]$. This completes the proof of Lemma \ref{le3.1}.
\end{proof}
Based on Lemma \ref{le3.1}, now, we are able to prove Theorem \ref{th1.1}.
\begin{proof}[Proof of Theorem \ref{th1.1}]
Let $(u_m,b_m)$ be a sequence of weak solutions to (\ref{3.1}) on $Q_T$, and $m=1,2,\cdots$. First, it is obvious that for all $v\in V$
\begin{align*}
\int_{\Omega}\partial_t u_m\cdot vdx&\leq\left|\int_{\Omega}(-u_m\cdot\nabla u_m+\Delta u_m+b_m\cdot\nabla b_m)\cdot vdx\right| \\
& \leq (\|u_m\|^{2}_{L^4}+\|\nabla u_m\|_{L^2}+\|b_m\|^{2}_{L^4})\|\nabla v\|_{L^2}\\
& \leq c(\|u_m\|^{2\theta}_{L^2}\|\nabla u_m\|^{2(1-\theta)}_{L^2}+\|\nabla u_m\|_{L^2}+\|b_m\|^{2\theta}_{L^2}\|b_m\|^{2(1-\theta)}_{H^1})
\|\nabla v\|_{L^2},
\end{align*}
where $\theta=\frac12$ for n=2 and $\theta=\frac14$ for $n=3$. This implies that $\partial_tu_m\in L^p([0,T];V')$ with
$p=2$ for $n=2$ and $p=\frac43$ for $n=3$. By the same way, we can also prove that $\partial_t\tilde{b}_m\in L^p([0,T];H^{-1})$. Thus,
by the Aubin-Lions Theorem, we conclude that the sequence $(u_m,b_m)_{m=1}^{\infty}$ is pre-compact in $L^2([0,T];H\times L^2)$. Furthermore, by
extracting subsequence (if necessary), we can see that there is $(u,b)$ such that
\begin{align}
&(u_m,b_m)\longrightarrow (u,b) \quad \text{weakly in}~L^2([0,T];V\times H^1),\label{3.22} \\
&(u_m,b_m)\longrightarrow (u,b) \quad \text{weak-star in}~L^{\infty}([0,T];H\times L^2),\label{3.23} \\
&(u_m,b_m)\longrightarrow (u,b) \quad \text{strongly in}~L^2([0,T];H\times L^2).\label{3.24}
\end{align}
Finally, we propose to show that $(u,b)$ is a weak solution to (\ref{1.2})-(\ref{1.3}). In fact, for all $\varphi\in C^{\infty}([0,T];V)$ with
$\varphi(T)=0$, there holds
\begin{equation*}
-\int_0^T\int_{\Omega}u_m\cdot\partial_t\varphi dxdt
+\int_0^T\int_{\Omega}[\nabla u_m-(u_m\otimes u_m)+(b_m\otimes b_m)]\cdot \nabla\varphi dxdt
=\int_{\Omega} u_{0m}\cdot \varphi(0,x)dx.
\end{equation*}
By (\ref{3.22})-(\ref{3.24}) and $u_{0m}\longrightarrow u_0$ in $H$, it is easy to see that
\begin{equation*}
-\int_0^T\int_{\Omega}u\cdot\partial_t\varphi dxdt
+\int_0^T\int_{\Omega}[\nabla u-(u\otimes u)+(b\otimes b)]\cdot \nabla\varphi dxdt
=\int_{\Omega} u_{0}\cdot \varphi(0,x)dx.
\end{equation*}
as $m\longrightarrow\infty$.  This implies that $u$ is a weak solution to (\ref{1.3}). By the same way, we can also conclude that $b$ is a weak
solution to (\ref{1.3}). In addition, we can also prove that $(u,b)$ satisfies the initial-boundary conditions (\ref{1.2})-(\ref{1.02}).
Finally, similar to (\ref{3.21})-(\ref{3.021}), for the weak solutions $(u,b)$, we have
\begin{equation*}
\sup_{0\leq t\leq T}(\|u(t)\|_{L^2}^{2}+\|b(t)\|_{L^2}^{2})+\int_0^t(\|\nabla u(\tau)\|_{L^2}^{2}+\|\nabla b(\tau)\|_{L^2}^{2})d\tau
\leq M_T.
\end{equation*}
Additionally, if $n=2$, the uniqueness for weak solutions $(u,b)$ follows from Theorem \ref{th3.1} below.
Thus, we have completed the proof of  Theorem \ref{th1.1}.
\end{proof}
Having proved the existence of weak solutions, if the spatial dimension $n=2$, we can further prove the continuous dependence on initial-boundary
conditions, which easily yields the uniqueness of global weak solution.
\begin{Theorem}\label{th3.1}
(Continuous dependence in the $2D$ case.) Let all  assumptions of theorem \ref{th1.1} be in force. if $n = 2$, then the problem (\ref{1.2})-(\ref{1.3})
admits a unique weak solution. Moreover, let $(u^{(i)},b^{(i)})$ $(i=1,2)$ be two weak solutions to (\ref{1.3}) corresponding to the initial data
$(u_0^{(i)},b_0^{(i)})$ and boundary data $(0,h^{(i)})$. Denoting $\bar{u}=u^{(1)}-u^{(2)}$,  $\bar{b}=b^{(1)}-b^{(2)}$, $\bar{u}_0=u_0^{(1)}-u_0^{(2)}$,
$\bar{b}_0=b_0^{(1)}-b_0^{(2)}$ and $\bar{h}=h^{(1)}-h^{(2)}$, then  the following estimate holds:
\begin{equation}\label{3.26}
\|\bar{ u}\|_{L^2}^{2}+\|\bar{b}\|_{L^2}^{2}+\int_{0}^{t}(\|\nabla\bar{ u}(\tau)\|_{L^2}^{2}
+\|\nabla\bar{b}(\tau)\|_{L^2}^{2})d\tau\leq c\left[\|\bar{u}_0\|_{L^2}^{2}+\|\bar{ b}_0\|_{L^2}^{2}
+c\int_{0}^{t}\|\bar{ h}(\tau)\|_{H^{\frac{1}{2}}(\Gamma)}^{2}d\tau\right],
\end{equation}
where $c$ depends on  $T$, $\Omega$, $\|u_0^{(i)}\|_{L^2}$, $\|b_0^{(i)}\|_{L^2}$, $\|h^{(i)}\|_{L^{4}([0,T];H^{\frac12})}$,
$\|\partial_th^{(i)}\|_{L^{2}([0,T];H^{-\frac12})}$.
\end{Theorem}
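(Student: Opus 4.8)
The plan is to run a difference-energy argument that exploits the divergence-free structure of the MHD coupling together with the two-dimensional Ladyzhenskaya interpolation. First I would subtract the two copies of (\ref{1.3}) and rewrite each nonlinearity by the bilinear splitting, e.g.
\begin{equation*}
u^{(1)}\cdot\nabla u^{(1)}-u^{(2)}\cdot\nabla u^{(2)}=\bar u\cdot\nabla u^{(1)}+u^{(2)}\cdot\nabla\bar u,
\end{equation*}
and likewise for $b\cdot\nabla b$, $u\cdot\nabla b$, $b\cdot\nabla u$. This yields
\begin{equation*}
\partial_t\bar u-\Delta\bar u+\bar u\cdot\nabla u^{(1)}+u^{(2)}\cdot\nabla\bar u-\bar b\cdot\nabla b^{(1)}-b^{(2)}\cdot\nabla\bar b+\nabla\bar p=0,
\end{equation*}
together with the corresponding $\bar b$-equation, the constraints $\operatorname{div}\bar u=\operatorname{div}\bar b=0$, the initial data $(\bar u_0,\bar b_0)$, and the boundary data $\bar u|_\Gamma=0$, $\bar b|_\Gamma=\bar h$.

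Since $\bar b$ does not vanish on $\Gamma$, I would homogenize it exactly as in (\ref{2.1}): let $\bar h_E$ solve $-\Delta\bar h_E=0$ in $\Omega$ with $\bar h_E=\bar h$ on $\Gamma$, and set $\tilde b:=\bar b-\bar h_E$, so $\tilde b|_\Gamma=0$. As in (\ref{3.17}) the $\tilde b$-equation then carries the source $\partial_t\bar h_E$ together with lifting terms inside the nonlinearities, all controlled by Lemma \ref{le2.2} in terms of $\|\bar h\|_{H^{1/2}(\Gamma)}$ and $\|\partial_t\bar h\|_{H^{-1/2}(\Gamma)}$. I would then test the $\bar u$-equation with $\bar u\in V$ and the $\tilde b$-equation with $\tilde b\in H^1_0$ and add. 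Two structural cancellations occur: the pure transport terms drop, $\int_\Omega(u^{(2)}\cdot\nabla\bar u)\cdot\bar u\,dx=\int_\Omega(u^{(2)}\cdot\nabla\bar b)\cdot\bar b\,dx=0$ because $\operatorname{div}u^{(2)}=0$ and $\bar u,\tilde b$ vanish on $\Gamma$; and the magnetic coupling cancels, since $\int_\Omega(b^{(2)}\cdot\nabla\bar b)\cdot\bar u\,dx+\int_\Omega(b^{(2)}\cdot\nabla\bar u)\cdot\bar b\,dx=\int_\Omega b^{(2)}\cdot\nabla(\bar u\cdot\bar b)\,dx=0$. What remains are the four ``stretching'' terms carrying a gradient of the first solution, $\int(\bar u\cdot\nabla u^{(1)})\cdot\bar u$, $\int(\bar b\cdot\nabla b^{(1)})\cdot\bar u$, $\int(\bar u\cdot\nabla b^{(1)})\cdot\bar b$, $\int(\bar b\cdot\nabla u^{(1)})\cdot\bar b$, plus lower-order lifting corrections.

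The heart of the estimate is to bound these stretching terms in two dimensions. Using the Ladyzhenskaya inequality $\|f\|_{L^4}^2\le c\|f\|_{L^2}\|\nabla f\|_{L^2}$ for $f\in H^1_0$, a representative term obeys
\begin{equation*}
\left|\int_\Omega(\bar u\cdot\nabla u^{(1)})\cdot\bar u\,dx\right|\le\|\bar u\|_{L^4}^2\|\nabla u^{(1)}\|_{L^2}\le c\|\bar u\|_{L^2}\|\nabla\bar u\|_{L^2}\|\nabla u^{(1)}\|_{L^2}\le\tfrac18\|\nabla\bar u\|_{L^2}^2+c\|\nabla u^{(1)}\|_{L^2}^2\|\bar u\|_{L^2}^2,
\end{equation*}
and similarly for the others, with Young's inequality absorbing all gradient-squared terms of $\bar u,\tilde b$ into the left-hand dissipation. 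This leaves a differential inequality
\begin{equation*}
\frac{d}{dt}\big(\|\bar u\|_{L^2}^2+\|\tilde b\|_{L^2}^2\big)+\|\nabla\bar u\|_{L^2}^2+\|\nabla\tilde b\|_{L^2}^2\le g(t)\big(\|\bar u\|_{L^2}^2+\|\tilde b\|_{L^2}^2\big)+F(t),
\end{equation*}
where $g(t)=c\big(\|\nabla u^{(1)}\|_{L^2}^2+\|\nabla b^{(1)}\|_{L^2}^2+\|\nabla\bar h_E\|_{L^2}^2\big)$ and $F(t)$ collects the lifting sources, bounded by $\|\bar h\|_{H^{1/2}(\Gamma)}$ (the $\partial_t\bar h$ contribution being subsumed in the constant $c$, which by hypothesis already depends on $\|\partial_th^{(i)}\|_{L^2H^{-1/2}}$). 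By Theorem \ref{th1.1} the solutions lie in $L^2([0,T];V\times H^1)$, hence $g\in L^1([0,T])$, and Gronwall's inequality gives (\ref{3.26}) after converting $\|\tilde b\|_{L^2}$ back to $\|\bar b\|_{L^2}$ via Lemma \ref{le2.2}. Taking $\bar u_0=\bar b_0=0$ and $\bar h=0$ forces $\bar u=\bar b=0$, which is the asserted uniqueness.

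The step I expect to be the main obstacle is precisely the time-integrability of the Gronwall multiplier $g$: this is where $n=2$ is essential, since the Ladyzhenskaya exponent $\tfrac12$ lets each stretching term be split so that $\|\nabla u^{(1)}\|_{L^2}^2$ (not a higher power of a gradient) multiplies $\|\bar u\|_{L^2}^2$, and that coefficient is integrable by the basic energy bound alone; in three dimensions the exponent $\tfrac14$ destroys this, which is why continuous dependence is claimed only for $n=2$. A secondary point I would address is the admissibility of using $\bar u,\tilde b$ as test functions in the difference equations, which requires $\partial_t\bar u\in L^2([0,T];V')$ and $\partial_t\tilde b\in L^2([0,T];H^{-1})$; these hold for $n=2$ (as recorded in the proof of Theorem \ref{th1.1}), so the formal energy identity is rigorous and the Lions--Magenes lemma justifies differentiating $\|\cdot\|_{L^2}^2$.
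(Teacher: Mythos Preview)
Your overall architecture---difference system, energy method, 2D Ladyzhenskaya, Gronwall---is the same as the paper's, and your identification of the cancellations and of why $n=2$ is essential is correct. The one substantive divergence is in how you handle the inhomogeneous boundary trace of $\bar b$, and this is where your argument does not quite reproduce the stated estimate (\ref{3.26}).

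The paper does \emph{not} lift $\bar b$. It multiplies (\ref{3.27})$_2$ directly by $\bar b$; integration by parts of $-\Delta\bar b$ against $\bar b$ produces the boundary integral $K_3=\int_\Gamma\partial_\nu\bar b\cdot\bar h\,ds$, which is then controlled by the trace pairing
\[
K_3\le \|\partial_\nu\bar b\|_{H^{-1/2}(\Gamma)}\|\bar h\|_{H^{1/2}(\Gamma)}\le \tfrac14\|\nabla\bar b\|_{L^2}^2+c\|\bar h\|_{H^{1/2}(\Gamma)}^2.
\]
This produces only $\|\bar h\|_{H^{1/2}(\Gamma)}^2$ on the right, exactly as in (\ref{3.26}). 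Your route via the elliptic lift $\bar h_E$ is perfectly legitimate, but it introduces the source $\partial_t\bar h_E$ in the $\tilde b$-equation; testing against $\tilde b$ then leaves $\int_\Omega\partial_t\bar h_E\cdot\tilde b\,dx$, which after Young and Lemma \ref{le2.2} contributes an \emph{additive} term $c\int_0^t\|\partial_t\bar h(\tau)\|_{H^{-1/2}(\Gamma)}^2\,d\tau$ to the right-hand side of the final estimate. Your claim that this contribution is ``subsumed in the constant $c$'' is not correct: the constant in (\ref{3.26}) is allowed to depend on $\|\partial_t h^{(i)}\|_{L^2H^{-1/2}}$ only multiplicatively, whereas $\|\partial_t\bar h\|_{H^{-1/2}}^2$ cannot in general be bounded by $\|\bar h\|_{H^{1/2}}^2$ times such a constant (think $\bar h(t,x)=\varepsilon\sin(t/\varepsilon)\phi(x)$). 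Thus your approach yields a slightly weaker continuous-dependence inequality than (\ref{3.26}), though it is of course still sufficient for uniqueness (since $\bar h\equiv 0$ forces $\partial_t\bar h\equiv 0$). If you want exactly (\ref{3.26}), test with $\bar b$ rather than $\tilde b$ and handle the resulting boundary term as above.
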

\begin{proof}
By considering the difference of the equations solved by $(u^{(1)},b^{(1)})$, $(u^{(2)},b^{(2)})$, we have:
\begin{equation}\label{3.27}
\begin{cases}
\partial_t\bar{u}- \Delta\bar{u}+\bar{u}\cdot\nabla u^{(1)}+u^{(2)}\cdot\nabla \bar{u}-\bar{b}\cdot\nabla b^{(1)}-b^{(2)}\cdot\nabla \bar{b}
+\nabla\bar{p}=0,\\
\partial_t\bar{b}- \Delta\bar{b}+\bar{u}\cdot\nabla b^{(1)}+u^{(2)}\cdot\nabla \bar{b}-\bar{b}\cdot\nabla u^{(1)}-b^{(2)}\cdot\nabla \bar{u}=0,
\end{cases}
\end{equation}
where $\bar{p}=p^{(1)}-p^{(2)}$ and $p^{(i)}$ is the pressure terms corresponding to $(u^{(i)},b^{(i)})$.
Multiplying  $(\ref{3.27})_1$ and $(\ref{3.27})_2$ with $\bar{u}$  and  $\bar{b}$, respectively, we obtain
\begin{align*}
\frac{1}{2}&\frac{d}{dt}(\|\bar{u}\|_{L^2}^{2}+\|\bar{b}\|_{L^2}^{2})
+\|\nabla\bar{u}\|_{L^2}^{2}+\|\nabla\bar{b}\|_{L^2}^{2}\\
=&\int_{\Omega}(\bar{b}\cdot\nabla b^{(1)}+ b^{(2)}\cdot\nabla\bar{b}-\bar{u}\cdot\nabla u^{(1)})\cdot\bar{u}dx \\
&+\int_{\Omega}(\bar{b}\cdot\nabla u^{(1)}
+ b^{(2)}\cdot\nabla \bar{u}-\bar{u}\cdot\nabla b^{(1)})\cdot\bar{b}dx
+\int_{\Gamma}\partial_{\nu}\bar{b}\cdot \bar{h}ds,\\
:=&K_1+K_2+K_3.
\end{align*}
Applying the H\"{o}lder, Young and Sobolev's inequalities, it holds that
\begin{align*}
K_1+K_2+K_3 \leq&(\|\bar{u}\|_{L^4}^{2}+\|\bar{b}\|_{L^4}^{2})\|\nabla u^{(1)}\|_{L^2}\\
&+2\|\bar{b}\|_{L^4}\|\bar{u}\|_{L^4}\|\nabla b^{(1)}\|_{L^2}
+\|\partial_{\nu}\bar{b}\|_{H^{-\frac12}(\Gamma)}\|\bar{h}\|_{H^{\frac12}(\Gamma)}\\
\leq& c(\|\bar{u}\|_{L^2}\|\nabla\bar{u}\|_{L^2}+\|\bar{b}\|_{L^2}\|\bar{b}\|_{H^1})\|\nabla u^{(1)}\|_{L^2}\\
&+c\|\bar{b}\|_{L^2}^{\frac{1}{2}}\|\bar{b}\|_{H^1}^{\frac{1}{2}}\|\bar{u}\|_{L^2}^{\frac{1}{2}}
\|\nabla\bar{u}\|_{L^2}^{\frac{1}{2}}\|\nabla b^{(1)}\|_{L^2}
+\frac{1}{4}\|\nabla\bar{b}\|_{L^2}^2+c\|\bar{h}\|^2_{H^{\frac12}}.
\end{align*}
Note that
\begin{equation*}
\|\bar{b}\|_{H^1}\leq\|\tilde{\bar{b}}\|_{H^1}+ \|\bar{h}_E\|_{H^1}
\leq c\|\nabla\tilde{\bar{b}}\|_{L^2}+c\|\bar{ h}\|_{H^{\frac{1}{2}}(\Gamma)}
\leq c\|\nabla\bar{b}\|_{L^2}+c\|\bar{ h}\|_{H^{\frac{1}{2}}(\Gamma)}
\end{equation*}
with $\tilde{\bar{b}}=\bar{b}-\bar{h}_E$.

Putting these estimates together, we arrive at
\begin{align*}
&\frac{d}{dt}(\|\bar{u}\|_{L^2}^{2}+\|\bar{b}\|_{L^2}^{2})+\|\nabla\bar{u}\|_{L^2}^{2}+\|\nabla\bar{b}\|_{L^2}^{2}\\
&\leq c(\|\nabla u^{(1)}\|_{L^2}^{2}+\|\nabla b^{(1)}\|_{L^2}^{2})(\|\bar{u}\|_{L^2}^{2}+\|\bar{b}\|_{L^2}^{2})
+c\| \bar{h}\|_{H^{\frac{1}{2}}(\Gamma)}^{2}.
\end{align*}
This, together with Gronwall's inequality  implies  (\ref{3.26}). Thus, we have completed the proof of Theorem \ref{th3.1}.
\end{proof}
\subsection{Existence and uniqueness of strong solution}
In this section, when $n=2$,  we aim to prove the existence of strong solution to (\ref{1.2})-(\ref{1.3}) under the more regular initial-boundary
conditions (\ref{1.8}). First, we introduce the definition of strong solution to (\ref{1.3}).
\begin{Definition}\label{de3.2}
We say that a pair $(u,b)$ is a strong solution to the problem (\ref{1.2})-(\ref{1.3}), if
\begin{itemize}
\item  it is a weak solution and moreover $(u,b)\in L^{\infty}([0,T];V\times H^1)\cap L^2([0,T];H^2\times H^2)$.
\item  $(u_0,b_0)\in V\times H^1$ and the equation (\ref{1.3}) holds almost everywhere on $Q_T$.
\end{itemize}
\end{Definition}
Now, we start to prove Theorem \ref{th1.2}.
\begin{proof}[Proof of Theorem \ref{th1.2}]
Multiplying  $(\ref{1.3})_1$ and $(\ref{1.3})_2$ with $Su=-\Delta u+\nabla p$ and $-\Delta \hat{b}$, respectively, we have
\begin{align*}
&\frac{1}{2}\frac{d}{dt}(\|\nabla u\|_{L^2}^{2}+\|\nabla\hat{b}\|_{L^2}^{2})
+\|Su\|_{L^2}^{2}+\|\Delta\hat{b}\|_{L^2}^{2}\\
&= \int_{\Omega}(u\cdot\nabla u)\cdot Sudx-\int_{\Omega}(b\cdot\nabla b)\cdot Su dx
+\int_{\Omega}(u\cdot\nabla b)\cdot\Delta\hat{b}dx-\int_{\Omega}(b\cdot\nabla u)\cdot\Delta\hat{b}dx \\
&:= J_1+J_2+J_3+J_4,
\end{align*}
where $\hat{b}=b-h_p$.

By Lemma \ref{le2.1}, Young's inequality and Sobolev's inequality, we can see that
\begin{align*}
J_1&\leq\|u\|_{L^{\infty}}\|\nabla u\|_{L^2}\|Su\|_{L^2}\leq c\|u\|_{L^2}^{\frac{1}{2}}\|u\|_{H^2}^{\frac{1}{2}}\|\nabla u\|_{L^2}\|Su\|_{L^2}\\
&\leq c\|u\|_{L^2}^{2}\|\nabla u\|_{L^2}^{4}+\frac18\|Su\|_{L^2}^{2}.
\end{align*}
Next, by Sobolev's inequality and the equivalent norms $\|\hat{b}\|_{H^2}\approx \|\Delta\hat{b}\|_{L^2}$ (cf. \cite{bo4}),
we are in a position to obtain
\begin{align*}
J_2&\leq\|b\|_{L^{\infty}}\|\nabla b\|_{L^2}\|Su\|_{L^2}  \\
&\leq c\|b\|_{L^2}^{\frac{1}{2}}\|b\|_{H^2}^{\frac{1}{2}}(\|\nabla\hat{b}\|_{L^2}+\|\nabla h_p\|_{L^2})\|Su\|_{L^2}\\
&\leq\frac18\|Su\|_{L^2}^{2}+\frac14\|\Delta\hat{b}\|_{L^2}^{2}+c\|{h}\|_{H^{\frac{3}{2}}(\Gamma)}^{2}
+c\|b\|_{L^2}^{2}(\|\nabla \hat{b}\|_{L^2}^{4}+\|h\|_{H^{\frac{1}{2}}(\Gamma)}^{4}).
\end{align*}
Similarly, we further obtain
\begin{align*}
J_3+J_4&\leq\|u\|_{L^{\infty}}\|\nabla b\|_{L^2}\|\Delta\hat{b}\|_{L^2}+\|b\|_{L^{\infty}}\|\nabla u\|_{L^2}
\|\Delta\hat{b}\|_{L^2}\\
&\leq c\|u\|_{L^2}^{\frac{1}{2}}\| u\|_{H^2}^{\frac{1}{2}}\|\nabla b\|_{L^2}\|\Delta\hat{b}\|_{L^2}
+c\|b\|_{L^2}^{\frac{1}{2}}\|b\|_{H^2}^{\frac{1}{2}}\|\nabla u\|_{L^2}\|\Delta\hat{b}\|_{L^2}\\
&\leq\frac18\|Su\|_{L^2}^{2}+\frac14\|\Delta\hat{b}\|_{L^2}^{2}
+c\|u\|_{L^2}^{2}(\|\nabla\hat{b}\|_{L^2}^{4}+\|h\|_{H^{\frac{1}{2}}(\Gamma)}^{4})
+c\|{h}\|_{H^{\frac{3}{2}}(\Gamma)}^{2}+c\|b\|_{L^2}^{2}\|\nabla u\|_{L^2}^{4}.
\end{align*}
Putting these estimates together, then we have
\begin{align}\label{3.28}
&\frac{d}{dt}(\|\nabla u\|_{L^2}^{2}+\|\nabla\hat{b}\|_{L^2}^{2})+\|Su\|_{L^2}^{2}+\|\Delta\hat{b}\|_{L^2}^{2}\nonumber\\
&\leq K(\|\nabla u\|_{L^2}^{2}+\|\nabla\hat{b}\|_{L^2}^{2})+c(\|u\|_{L^2}^{2}+\|b\|_{L^2}^{2})\|h\|_{H^{\frac{1}{2}}(\Gamma)}^{4}
+c\|h\|_{H^{\frac{3}{2}}(\Gamma)}^{2},
\end{align}
where
\begin{equation*}
K:=c(\|u\|_{L^2}^{2}+\|b\|_{L^2}^{2})(\|\nabla u\|_{L^2}^{2}+\|\nabla\hat{b}\|_{L^2}^{2}).
\end{equation*}
Applying Gronwall's inequality,  from (\ref{3.28}), it follows that
\begin{equation*}
\|\nabla u\|_{L^2}^{2}+\|\nabla\hat{b}\|_{L^2}^{2}+\int_{0}^{t}(\|Su\|_{L^2}^{2}+\|\Delta\hat{b}\|_{L^2}^{2})(\tau)d\tau
\leq\phi(t)e^{\phi(t)}\omega(t)+\omega(t),
\end{equation*}
where
\begin{align*}
\phi(t)=&e^{\int_{0}^{t}K(\tau)d\tau},\\
\omega(t)=&\|\nabla u_0\|_{L^2}^{2}+\|\nabla\hat{b}_0\|_{L^2}^{2} +c\int_{0}^{t}(\|u(\tau)\|_{L^2}^{2}+\|b(\tau)\|_{L^2}^{2})\|h(\tau)\|_{H^{\frac{1}{2}}(\Gamma)}^{4}d\tau
+c\int_{0}^{t}\|h(\tau)\|_{H^{\frac{3}{2}}(\Gamma)}^{2}d\tau.
\end{align*}
This, combined  with  (\ref{2.4}) implies (\ref{1.9}). Finally, the uniqueness of strong solutions can be derived from  Theorem \ref{th3.2}.
Thus, we have completed the proof of Theorem \ref{th1.2}.
\end{proof}
Analogous to Theorem \ref{th3.1}, based on the existence of strong solution $(u,b)$ in Theorem \ref{th1.2}, now, we proceed to prove the continuous dependence of initial-boundary data, from which, we derive the uniqueness of strong solutions $(u,b)$.
\begin{Theorem}\label{th3.2}
(Continuous dependence in the $2D$ case.) Let all assumptions of theorem \ref{th1.2} be verified. Then the problem (\ref{1.2})-(\ref{1.3}) admits
a unique strong solution. Moreover, let $(u^{(i)},b^{(i)})(i=1,2)$ be two strong solutions to (\ref{1.3}) corresponding to the initial data
$(u_{0}^{(i)},b_{0}^{(i)})$ and boundary data $(0,h^{(i)})$.  then  the following estimate holds:
\begin{align}\label{3.29}
&\|\nabla\bar{u}\|_{L^{2}}^{2}+\|\bar{b}\|_{H^1}^{2}+\int_{0}^{t}(\|\bar{ u}(\tau)\|_{H^2}^{2}+\|\bar{b}(\tau)\|_{H^2}^{2})d\tau\nonumber\\
&\leq c\left[\|\nabla\bar{u}_0\|_{L^2}^{2}+\|\bar{b}_0\|_{H^1}^{2}
+c\int_{0}^{t}(\|\bar{h}(\tau)\|_{H^{\frac{3}{2}}(\Gamma)}^{2}
+\|\partial_t\bar{h}(\tau)\|_{H^{-\frac{1}{2}}(\Gamma)}^{2})d\tau\right],
\end{align}
where $c$ be a positive constant depends on $T$, $\Omega$, $\|\nabla u_0^{(i)}\|_{L^2}$, $\|b_0^{(i)}\|_{H^1}$, $\|h^{(i)}\|_{L^{2}([0,T];H^{\frac32})}$,
$\|\partial_th^{(i)}\|_{L^{2}([0,T];H^{-\frac12})}$.
\end{Theorem}
\begin{proof}
The process  is similar with Theorem \ref{th3.1}, here, we just give a sketch of the proof. Multiplying (\ref{3.27})$_1$ and (\ref{3.27})$_2$ with
$S\bar{u}=-\Delta \bar{u}+\nabla \bar{p}$ and $-\Delta \hat{\bar{b}}$, respectively, we have
\begin{align}\label{3.30}
\frac{1}{2}&\frac{d}{dt}(\|\nabla\bar{u}\|_{L^{2}}^{2}+\|\nabla\hat{\bar{b}}\|_{L^{2}}^{2})
+\|S\bar{u}\|_{L^{2}}^{2}+\|\Delta\hat{\bar{b}}\|_{L^{2}}^{2}\nonumber\\
=&\int_{\Omega}(\bar{b}\cdot\nabla b^{(1)}+b^{(2)}\cdot\nabla\bar{b})\cdot S\bar{u}dx
-\int_{\Omega}(\bar{u}\cdot\nabla u^{(1)}+u^{(2)}\cdot\nabla\bar{u})\cdot S\bar{u}dx\nonumber\\
&-\int_{\Omega}(\bar{b}\cdot\nabla u^{(1)}+b^{(2)}\cdot\nabla\bar{u})\cdot \Delta\hat{\bar{b}}dx
+\int_{\Omega}(\bar{u}\cdot\nabla b^{(1)}+u^{(2)}\cdot\nabla\bar{b})\cdot\Delta\hat{\bar{b}}dx\nonumber\\
:=&W_{1}+W_{2}+W_{3}+W_{4},
\end{align}
where $\hat{\bar{b}}=\bar{b}-\bar{h}_p$, $\bar{h}_p=h_p^{(1)}-h_p^{(2)}$ and $h^{(i)}_p$ be the lifting functions of $h^{(i)}$.

For the term $W_{1}$, by H\"{o}lder, Young and Sobolev's inequalities, we deduce that
\begin{align*}
W_1&=\int_{\Omega}[(\hat{\bar{b}}+\bar{h}_p)\cdot\nabla b^{(1)}+b^{(2)}\cdot\nabla(\hat{\bar{b}}+\bar{h}_p)]\cdot S\bar{u}dx  \\
& \leq [(\|\hat{\bar{b}}\|_{L^4}+\|\bar{h}_p\|_{L^4})\cdot\|\nabla b^{(1)}\|_{L^4}
+\|b^{(2)}\|_{L^{\infty}}\cdot(\|\nabla\hat{\bar{b}}\|_{L^2}+\|\nabla\bar{h}_p\|_{L^2})]\cdot \|S\bar{u}\|_{L^2}\\
& \leq c(\|b^{(1)}\|_{H^2}^2+\|b^{(2)}\|_{H^2}^2)\|\nabla\hat{\bar{b}} \|_{L^2}^2
+c(\|b^{(1)}\|_{H^2}^2+\|b^{(2)}\|_{H^2}^2)\|h_p\|_{H^1}^2+\frac14\|S\bar{u}\|_{L^2}^2.
\end{align*}
Analogously, we further obtain
\begin{align*}
W_2&\leq c(\|u^{(1)}\|_{H^2}^2+\|u^{(2)}\|_{H^2}^2)\|\nabla\bar{u} \|_{L^2}^2 +\frac14\|S\bar{u}\|_{L^2}^2, \\
W_3&\leq c\|u^{(1)}\|_{H^2}^2 (\|\nabla \hat{\bar{b}}\|_{L^2}^2+\|\bar{h}_p\|_{H^1}^2)+c\|b^{(2)}\|_{H^2}\|\nabla\bar{u} \|_{L^2}^2
 +\frac14\|\Delta \hat{\bar{b}}\|_{L^2}^2, \\
W_4&\leq c\|b^{(1)}\|_{H^2}^2\|\nabla\bar{u} \|_{L^2}^2+c\|u^{(2)}\|_{H^2}^2(\|\nabla \hat{\bar{b}}\|_{L^2}^2+\|\nabla\bar{h}_p\|_{L^2}^2)
 +\frac14\|\Delta \hat{\bar{b}}\|_{L^2}^2.
\end{align*}
Inserting these estimates into (\ref{3.30}), yields that
\begin{align*}
\frac{d}{dt}&(\|\nabla\bar{u}\|_{L^{2}}^{2}+\|\nabla\hat{\bar{b}}\|_{L^{2}}^{2})
+\|S\bar{u}\|_{L^{2}}^{2}+\|\Delta\hat{\bar{b}}\|_{L^{2}}^{2}\nonumber\\
\leq&c(\|u^{(1)}\|_{H^2}^2+\|u^{(2)}\|_{H^2}^2+\|b^{(1)}\|_{H^2}^2+\|b^{(2)}\|_{H^2}^2)
(\|\nabla\bar{u} \|_{L^2}^2+\|\nabla \hat{\bar{b}}\|_{L^2}^2)\\
&+c(\|u^{(1)}\|_{H^2}^2+\|u^{(2)}\|_{H^2}^2+\|b^{(1)}\|_{H^2}^2+\|b^{(2)}\|_{H^2}^2)\|\bar{h}_p\|_{H^1}^2.
\end{align*}
This, together with \ref{th1.2}, Lemma \ref{le2.3} and Gronwall's inequality implies (\ref{3.29}).
Thus, we have completed the proof of Theorem \ref{th3.2}.
\end{proof}
\section{Uniform attractors in the two-dimensional case}\label{se4}
In this section, we aim to study the existence of a uniform attractor for (\ref{1.2})-(\ref{1.3}) with $n=2$. We suppose that the time
dependency can be completely described by a finite set of functions, and we denote it by $\sigma(t)$. In particular, in what follows, we call $\sigma(t)$
the (time) symbol and the set of all symbols will be called symbol space, which will usually be denoted by $\Sigma$. Then we give some fundamental
definition (see e.g. \cite{vm4}).
\begin{Definition}\label{de4.1}
Let $\Sigma$ be a symbol space. $\{U_{\sigma}(t,\tau),t\geq\tau,\tau\in\mathbb{R}\},\sigma\in\Sigma$ is said to be a family of processes in Banach space X, if the two-parameter family of mappings $\{U_{\sigma}(t,\tau)\}$ from $X$ to $X$ satisfy:
\begin{align*}
U_{\sigma}(t,s)\circ U_{\sigma}(s,\tau)&=\{U_{\sigma}(t,\tau)\},\forall\ t\geq s\geq\tau,\tau\in\mathbb{R},\\
U_{\sigma}(\tau,\tau)&=Id~ \text{(the identity operator)},~ \tau\in\mathbb{R}.
\end{align*}
where $\Sigma$ is a symbol space and $\sigma\in\Sigma$ is a symbol.
\end{Definition}
\begin{Definition}\label{de4.2}
We call set  $B_0\subset X$ the  uniformly (with respect to $\sigma\in\Sigma$) absorbing set for the family of process  $\{U_{\sigma}(t,\tau)\},$
$\sigma\in\Sigma$ if for any $\tau\in\mathbb{R}$ and every $B\in\mathcal{B}(X)$ there exists an absorbtion time $T_0=T_0(\tau,B)\geq\tau$ such that
$\cup_{\sigma\in\Sigma}U_{\sigma}(t,\tau)B\subset B_0$ for all $t\geq T_0$.
\end{Definition}
\begin{Definition}\label{de4.3}
A set $E\subset X$ is said to be  uniformly (w.r.t. $\sigma\in\Sigma$) attracting for the family of processes $\{U_{\sigma}(t,\tau)\},$
$\sigma\in\Sigma$ if for any fixed $\tau\in\mathbb{R}$ and every $B\in\mathcal{B}(X)$, there holds
\begin{equation*}
\lim\limits_{t \to \infty}\displaystyle{\sup_{\sigma\in\Sigma}}\ dist_{X}(U_{\sigma}(t,\tau)B,E)=0.
\end{equation*}
Here $dist_{X}(\cdot,\cdot)$ denotes  the Hausdorff semi-distance between subsets of a metric space $(X,d_{X})$.
\end{Definition}
\begin{Definition}\label{de4.4}
A closed set $\mathcal{A}_{\Sigma}\subset X$ is said to be the uniformly (w.r.t. $\sigma\in\Sigma$) attractor for the family of processes $\{U_{\sigma}(t,\tau)\},\sigma\in\Sigma$ if $\mathcal{A}_{\Sigma}$ satisfies the attracting property and the minimality property, namely
\begin{description}
\item[(i)] $\mathcal{A}_{\Sigma}$ is uniformly (w.r.t. $\sigma\in\Sigma$) attracting set;
\item[(ii)] $\mathcal{A}_{\Sigma}$ is contained in any closed uniformly attracting set.
\end{description}
\end{Definition}
In order to prove the existence of a uniform attractor for (\ref{1.2})-(\ref{1.3}), we will use the following additional definition.
\begin{Definition}\label{de4.5}
A family of processes $\{U_{\sigma}(t,\tau)\},\sigma\in\Sigma$ is said to be uniformly (w.r.t. $\sigma\in\Sigma$) $\omega$-limit compact if for any $\tau\in\mathbb{R}$ and any set $B\in\mathcal{B}(X)$, there holds
\begin{equation*}
B_t=\bigcup\limits_{\sigma\in\Sigma}\bigcup\limits_{s\geq t}U_{\sigma}(s,\tau)B
\end{equation*}
is bounded for all $t$ and $\lim\limits_{t \to \infty}\alpha(B_t)=0$. Here $\alpha$ is the Kuratowski measure, defined by
\begin{equation*}
\alpha(B):=inf\{r>0:~\text{$B$ has a finite cover by sets of $X$ with diameter less than $r$}\}.
\end{equation*}
\end{Definition}
In addition, for  present the main results we will use to prove the existence of a uniform attractor for (\ref{1.2})-(\ref{1.3}), we shall need the
following hypotheses:
\begin{description}
\item[(a1)] Let $\{T(t):~t\geq0\}$ be a family of operators acting on $\Sigma$ and satisfy
\begin{itemize}
\item $\{T(t)\}$ be a weakly continuous invariant semigroup on $\Sigma$, $T(t)\Sigma=\Sigma, \forall\ t\in\mathbb{R}_{+}$;
\item translation identity: $U_{\sigma}(t+s,\tau+s)=U_{T(s)\sigma}(t,\tau),\forall\ \sigma\in\Sigma,t\geq\tau,\tau\in\mathbb{R},s\geq0$.
\end{itemize}
\item[(a2)] Let $\Sigma$ be a weakly compact subset of some Banach space and $\{U_{\sigma}(t,\tau)\},\sigma\in\Sigma$ be $(X\times\Sigma,X)$-weakly continuous family of processes acting in $X$.
\end{description}
The following results  we will use in this section to prove the existence of a uniform attractor for (\ref{1.2})-(\ref{1.3}), and we
formulate it in the form we need (cf. \cite{bo21}).
\begin{Theorem}\label{th4.1}
Let the hypotheses (a1)-(a2) be verified. Suppose $\{U_{\sigma}(t,\tau)\}$, $\sigma\in \Sigma$ be a uniformly (w.r.t. $\sigma\in\Sigma$) $\omega$-limit
compact process in $X$ and has a weakly compact uniformly (w.r.t. $\sigma\in\Sigma$) absorbing set $B_0$. Then it possesses compact uniform
(w.r.t. $\sigma\in\Sigma$) attractor $\mathcal{A}_{\Sigma}$ satisfying
\begin{equation*}
\mathcal{A}_{\Sigma}=\omega_{0,\Sigma}(B_0)=\bigcup\limits_{\sigma\in\Sigma}\mathcal{K}_{\sigma}(0),~\forall s\in \mathbb{R}.
\end{equation*}
Here $\mathcal{K}_{\sigma}(s)$ is the section at $t=s$ of kernel $\mathcal{K}_{\sigma}$ of the process $\{U_{\sigma}(t,\tau)\}$ with symbol
$\sigma\in\Sigma$:
\begin{equation*}
\mathcal{K}_{\sigma}(s)=\left\{u(s):~u~\text{is a bounded complete trajectory of the  processes}~ U_{\sigma}(t,\tau)\right\}.
\end{equation*}
Furthermore, $\mathcal{K}_{\sigma}(s)$ is nonempty for all $\sigma\in\Sigma$.
\end{Theorem}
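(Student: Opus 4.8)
The plan is to follow the standard framework for uniform attractors of families of processes based on the Kuratowski measure of noncompactness, adapting the autonomous global-attractor construction to the non-autonomous setting with the symbol space $\Sigma$. The central object is the uniform $\omega$-limit set of the absorbing set $B_0$, namely
$$\omega_{0,\Sigma}(B_0)=\bigcap_{t\geq 0}\overline{\bigcup_{\sigma\in\Sigma}\bigcup_{s\geq t}U_\sigma(s,0)B_0}=\bigcap_{t\geq 0}\overline{(B_0)_t},$$
where $(B_0)_t$ denotes the tail appearing in Definition \ref{de4.5}. The key analytic tool is the elementary lemma on the Kuratowski measure: if $\{F_t\}_{t\geq 0}$ is a nested family of nonempty closed sets in the complete space $X$ with $\alpha(F_t)\to 0$ as $t\to\infty$, then $F_\infty:=\bigcap_t F_t$ is nonempty, compact, and $\mathrm{dist}_X(F_t,F_\infty)\to 0$. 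I verify its hypotheses with $F_t=\overline{(B_0)_t}$: the sets are nested and closed by construction, each is nonempty since $B_0\neq\emptyset$, and $\alpha(\overline{(B_0)_t})=\alpha((B_0)_t)\to 0$ is precisely the uniform $\omega$-limit compactness assumed. This yields at once that $\omega_{0,\Sigma}(B_0)$ is nonempty and compact.

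Next I establish the attracting property. The convergence $\mathrm{dist}_X(\overline{(B_0)_t},\omega_{0,\Sigma}(B_0))\to 0$ furnished by the Kuratowski lemma says exactly that $\omega_{0,\Sigma}(B_0)$ uniformly (w.r.t. $\sigma\in\Sigma$) attracts $B_0$. To upgrade this to attraction of an arbitrary $B\in\mathcal B(X)$, I invoke Definition \ref{de4.2}: there is an absorption time after which $\bigcup_\sigma U_\sigma(t,0)B\subset B_0$, and the process property of Definition \ref{de4.1} together with the uniform attraction of $B_0$ transfers the attraction to $B$. Minimality (property (ii) of Definition \ref{de4.4}) follows by the usual contradiction argument: if $E$ is any closed uniformly attracting set, then every point of $\omega_{0,\Sigma}(B_0)$ is a limit of points $U_{\sigma_n}(t_n,0)x_n$ with $t_n\to\infty$ and $x_n\in B_0$, which therefore lie within arbitrarily small distance of $E$; closedness of $E$ forces $\omega_{0,\Sigma}(B_0)\subset E$. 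Hence $\mathcal A_\Sigma:=\omega_{0,\Sigma}(B_0)$ is the uniform attractor.

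The structural identity $\mathcal A_\Sigma=\bigcup_{\sigma\in\Sigma}\mathcal K_\sigma(0)$ is where hypotheses (a1)--(a2) enter decisively, and this is the step I expect to be the main obstacle. The inclusion $\bigcup_\sigma\mathcal K_\sigma(0)\subset\mathcal A_\Sigma$ is the softer direction: any bounded complete trajectory remains in the absorbing set for all sufficiently negative initial times, so its value at $0$ lies in every tail $\overline{(B_0)_t}$ and hence in the intersection. For the reverse inclusion I must show that each $y\in\mathcal A_\Sigma$ lies on a bounded complete trajectory of some $U_\sigma$. Writing $y=\lim_n U_{\sigma_n}(t_n,0)x_n$ with $t_n\to\infty$, I use the translation identity $U_{\sigma_n}(t_n,0)=U_{T(t_n)\sigma_n}(0,-t_n)$ of (a1) to re-center the trajectories at time $0$, exploit the weak compactness of $\Sigma$ to extract a limiting symbol $\sigma\in\Sigma$, and combine the $(X\times\Sigma,X)$-weak continuity of (a2) with the strong compactness coming from $\omega$-limit compactness to pass to the limit on each finite window. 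A Cantor diagonal argument over an exhausting sequence $[-k,k]$ then produces a complete trajectory $u(\cdot)$ through $y$, bounded since it stays in $B_0$; by construction $u\in\mathcal K_\sigma$ with $u(0)=y$. Nonemptiness of $\mathcal K_\sigma(0)$ for each fixed $\sigma$ follows from the same construction applied to $U_\sigma(0,-n)x$, $x\in B_0$.

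The delicate points, and the reason the last step is the crux, are the following. First, I must ensure the diagonal limit genuinely satisfies the process identity $U_\sigma(t,\tau)u(\tau)=u(t)$ rather than a merely subsequential relation; this is exactly what the weak continuity in (a2) together with the semigroup invariance in (a1) are designed to guarantee. Second, the limiting symbol must lie in $\Sigma$ and be consistent across all windows, which rests on the invariance $T(t)\Sigma=\Sigma$ and the weak compactness of $\Sigma$. Once these are secured, the two inclusions combine to give $\mathcal A_\Sigma=\bigcup_{\sigma\in\Sigma}\mathcal K_\sigma(0)$, and the fact that the left-hand side is independent of the base time yields the stated characterization for all $s\in\mathbb R$, completing the proof of Theorem \ref{th4.1}.
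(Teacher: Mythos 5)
The paper offers no proof of Theorem \ref{th4.1}: it is quoted, in the form needed, from Lu--Wu--Zhong \cite{bo21} (which in turn follows the Chepyzhov--Vishik framework \cite{vm4}), and your argument is precisely the standard proof given there. Specifically, the Kuratowski-measure lemma applied to the nested closed tails $\overline{(B_0)_t}$, the transfer of attraction from $B_0$ to arbitrary bounded sets via absorption and the process identity, minimality by the limit-point argument, and the kernel characterization via the translation identity $U_{\sigma_n}(t_n,0)=U_{T(t_n)\sigma_n}(0,-t_n)$ combined with weak compactness of $\Sigma$, the $(X\times\Sigma,X)$-weak continuity, and a diagonal extraction over windows $[-k,k]$ all match the cited source, so your proposal is correct and takes essentially the same route.
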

Next, we introduce a useful conclusion which will be used to prove the uniform $\omega$-limit compact for a given process. Its proof can be retrieved e.g. from \cite{bo21}.
\begin{Lemma}\label{le4.1}
Let $X$ be a  uniform convex Banach space.  If for any fixed $\tau\in\mathbb{R}$, $B\in\mathcal{B}(X)$ and $\varepsilon>0$, there exists $T_0=T_0(\tau,B,\varepsilon)\geq\tau$ and a finite dimensional subspace $X_1$ of $X$ such that
\begin{description}
  \item[($i_1$)]$P(\cup_{\sigma\in\Sigma}\cup_{t\geq T_0}U_{\sigma}(t,\tau)B)$ is bounded
  \item[($i_2$)]$ \|(Id-P)(\cup_{\sigma\in\Sigma}\cup_{t\geq T_0}U_{\sigma}(t,\tau)u\|_{X}\leq\varepsilon$, $\forall\ u\in B$,
\end{description}
where $P:X\rightarrow X_1$ is a bounded projector. Then the family of processes $\{U_{\sigma}(t,\tau)\},\sigma\in\Sigma$ is uniformly
(w.r.t. $\sigma\in\Sigma)$ $\omega$-limit compact,
\end{Lemma}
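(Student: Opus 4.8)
The plan is to reduce the statement to the elementary calculus of the Kuratowski measure of noncompactness $\alpha$ and to exploit the splitting $x=Px+(Id-P)x$ furnished by the bounded projector onto the finite dimensional subspace $X_1$. I would first recall the standard properties of $\alpha$: monotonicity ($A\subseteq B\Rightarrow\alpha(A)\le\alpha(B)$), semi-additivity ($\alpha(A+B)\le\alpha(A)+\alpha(B)$ for the Minkowski sum), the characterization $\alpha(A)=0\iff\overline{A}$ compact, and the diameter bound $\alpha(A)\le\mathrm{diam}(A)$. The guiding idea is that the $P$-component of the relevant sets lives in finite dimensions and so contributes nothing to $\alpha$, whereas the $(Id-P)$-component is controlled in norm by $\varepsilon$; semi-additivity then squeezes $\alpha(B_t)$ below $2\varepsilon$.

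Concretely, I would fix $\tau\in\mathbb{R}$, $B\in\mathcal{B}(X)$ and $\varepsilon>0$, and take $T_0$, $X_1$, $P$ as in the hypotheses. Setting $\mathcal{O}:=\bigcup_{\sigma\in\Sigma}\bigcup_{s\ge T_0}U_{\sigma}(s,\tau)B$, one has $B_t\subseteq\mathcal{O}$ for every $t\ge T_0$, and writing each point of $\mathcal{O}$ as $x=Px+(Id-P)x$ gives $\mathcal{O}\subseteq P\mathcal{O}+(Id-P)\mathcal{O}$. The key inequality would then read
\[
\alpha(B_t)\le\alpha(\mathcal{O})\le\alpha(P\mathcal{O})+\alpha\big((Id-P)\mathcal{O}\big),\qquad t\ge T_0 .
\]
By ($i_1$) the set $P\mathcal{O}$ is a bounded subset of the finite dimensional space $X_1$, hence relatively compact, so $\alpha(P\mathcal{O})=0$; by ($i_2$) every element of $(Id-P)\mathcal{O}$ has norm at most $\varepsilon$, so $\mathrm{diam}\big((Id-P)\mathcal{O}\big)\le 2\varepsilon$ and thus $\alpha\big((Id-P)\mathcal{O}\big)\le 2\varepsilon$. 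Combining these yields $\alpha(B_t)\le 2\varepsilon$ for all $t\ge T_0$.

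Finally, since $t\mapsto B_t$ is non-increasing, $\alpha(B_t)$ is non-increasing in $t$, so the previous bound gives $\limsup_{t\to\infty}\alpha(B_t)\le 2\varepsilon$; letting $\varepsilon\downarrow 0$ I would conclude $\lim_{t\to\infty}\alpha(B_t)=0$. Boundedness of $B_t$ for $t\ge T_0$ is already implicit in ($i_1$)–($i_2$) (both components of $\mathcal{O}$ being bounded), while for the finite range $t<T_0$ it follows from the uniform-in-$\sigma$ a priori bounds, i.e. the bounded uniformly absorbing set used in the application; hence $\{U_{\sigma}(t,\tau)\}$ is uniformly $\omega$-limit compact. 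I expect the only genuinely delicate point to be the bookkeeping with $\alpha$: one must apply semi-additivity to the Minkowski sum dominating $\mathcal{O}$ and invoke ``bounded in finite dimensions $\Rightarrow$ precompact'' for $P\mathcal{O}$ rather than for $\mathcal{O}$ itself. Everything else is a direct consequence of the two hypotheses, and uniform convexity of $X$ is not needed for this measure-theoretic argument, entering only the surrounding weak-compactness considerations.
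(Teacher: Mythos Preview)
The paper does not supply its own proof of this lemma; it simply states the result and refers the reader to \cite{bo21} (Lu--Wu--Zhong). Your argument is precisely the standard one found in that reference and its relatives: split via $x=Px+(Id-P)x$, use semi-additivity of the Kuratowski measure on the Minkowski sum, kill the finite-dimensional piece by precompactness, and bound the tail by its diameter. So there is nothing to compare against, and your proposal is correct.

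Two minor remarks. First, your observation that uniform convexity is not actually used in the measure-of-noncompactness calculation is accurate; in the original literature it is sometimes invoked for ancillary reasons (e.g.\ reflexivity and weak compactness of the absorbing set in the surrounding theorem), not for this splitting argument itself. Second, your handling of the ``$B_t$ bounded for all $t$'' clause is honest but slightly outside the lemma as stated: the hypotheses $(i_1)$--$(i_2)$ genuinely only give boundedness for $t\ge T_0$, and boundedness on the finite interval $[\tau,T_0)$ must indeed come from the context (an absorbing set or continuity of the process on bounded time intervals). In applications, including the one in this paper, that is always available, so the gap is formal rather than substantive.
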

\subsection{Bounded absorbing sets for (\ref{1.2})-(\ref{1.3})}\label{se4.1}
For applying the Lemma \ref{le4.1} to prove the existence of uniform abstractor, we need to obtain some absorbing sets for the trajectories of
(\ref{1.2})-(\ref{1.3}). The symbol spaces in our cases is generated by the boundary  data $h(x,t)$. Before introducing the symbol spaces, we first recall
the definition of normal function spaces (see, e.g. \cite{bo21}).
\begin{Definition}\label{de4.6}
Let $E$ be a reflexive separable Banach space. We call a function $g\in L_{loc}^{p}(\mathbb{R},E)$ ($1\leq p<\infty$) is normal if for every
$\varepsilon>0$, there exists $\eta>0$ such that:
\begin{equation*}
\displaystyle{\sup_{t\in\mathbb{R}}}\int_{t}^{t+\eta}\|g(\tau)\|_{E}^{p}d\tau\leq\varepsilon.
\end{equation*}
\end{Definition}
For simplicity, in what follows, we denote the   spaces of all normal functions by $L_n^p(\mathbb{R};E)$. Moreover, in this section, we need the following
assumptions:
\begin{description}
\item[(A1)]if $h\in L_n^2((0,\infty);H^{\frac12}(\Gamma))\cap L_n^4((0,\infty);H^{\frac12}(\Gamma))$,
 $\partial_th\in L_n^2((0,\infty);H^{-\frac12}(\Gamma))$ and $\sup_{t\geq0}\|h\|_{H^{\frac12}(\Gamma)}$ suitable small, then we denote
 the symbol spaces by $\Sigma_0=\mathcal {H}(h)$;
\item[(A2)] if $h\in L_n^2((0,\infty);H^{\frac32}(\Gamma))$, $\partial_th\in L_n^2((0,\infty);H^{-\frac12}(\Gamma))$ and
$\sup_{t\geq0}\|h\|_{H^{\frac12}(\Gamma)}$ suitable small, we will consider the symbol space $\Sigma_1=\mathcal {H}(h)$.
\end{description}
Here, $\mathcal {H}(h)$ stands for  the hull of $h$.

In particular, in what follows, a natural phase space can be given by
\begin{equation*}
X=H\times L^2 ~(\text{or}~V\times H^1),\  (u,b)\in X.
\end{equation*}
Furthermore, in virtue of the global existence of weak (strong) solution, we can define the process associated with the solution to
(\ref{1.2})-(\ref{1.3}) acting in the phase spaces $X$ indexed by a symbol $\sigma\in \Sigma_0$ (or $\sigma\in \Sigma_1$).
\begin{Lemma}\label{le4.2}
Let $n=2$. Let all assumptions of Theorem \ref{th1.1} and (A1) be verified. Then the system  (\ref{1.2})-(\ref{1.3}) admits a uniform
(w.r.t. $\sigma\in\Sigma_0$) absorbing set $B_0\subset H\times L^{2}:$
\begin{equation*}
B_0=\{(u,b)\in H\times L^{2}:~ \|u\|_{H}^{2}+\|b\|_{L^2}^{2}\leq\rho_0\}.
\end{equation*}
where
\begin{equation*}
\rho_0=2\tilde{c}\|h\|^2_{L^{\infty}((0,\infty);L^2(\Omega))}+\frac{e^{c_p}c_0}{e^{c_p}-1}\left(\|h\|^2_{L_n^2(H^{\frac12}(\Gamma))}
+\|\partial_th\|^2_{L_n^2(H^{-\frac12}(\Gamma))}+\|h\|^4_{L_n^4(H^{\frac12}(\Gamma))}\right),
\end{equation*}
and the uniform (w.r.t. $\sigma\in\Sigma_0$) absorbing time of bounded set $B$ in $B_0$ is given by:
\begin{equation*}
t_0(B)=\frac1{c_p}\ln\frac{\text{diam}(B)}{\tilde{c}\|h\|^2_{L^{\infty}((0,\infty);L^2(\Omega))}}.
\end{equation*}
Moreover, for $t\geq t_0(B)$, there holds
\begin{equation}\label{4.2}
\int_{t}^{t+1}\|u(\tau)\|_{V}^{2}d\tau+\int_{t}^{t+1}\|b(\tau)\|_{H^1}^{2}d\tau\leq\rho_1,
\end{equation}
with
\begin{equation*}
\rho_1=(c_p+1+c_{\Omega})\rho_0,
\end{equation*}
where $c_p$, $c_0$, $\tilde{c}$ and $c_{\Omega}$ are positive constants defined in (\ref{4.3}), (\ref{4.4}), (\ref{4.5}) and (\ref{4.6}), respectively.
\end{Lemma}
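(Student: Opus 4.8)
The plan is to derive a uniformly absorbing set by revisiting the energy inequality (\ref{3.20}) and closing it with the Poincar\'e inequality to produce genuine exponential decay rather than the Gronwall blow-up used in Lemma \ref{le3.1}. The key structural difference from the existence proof is that here the symbol assumption (A1) supplies \emph{normal} regularity of $h$ together with smallness of $\sup_{t\ge0}\|h\|_{H^{1/2}(\Gamma)}$, which is exactly what is needed to absorb the linear-in-energy coefficient on the right-hand side into the dissipation. First I would work with the shifted unknown $\tilde b=b-h_E$ as in (\ref{3.17}) and write the energy identity for $\|u\|_{L^2}^2+\|\tilde b\|_{L^2}^2$; applying Poincar\'e, $\|\nabla u\|_{L^2}^2+\|\nabla\tilde b\|_{L^2}^2\ge c_p(\|u\|_{L^2}^2+\|\tilde b\|_{L^2}^2)$, and using the smallness of $\|h\|_{H^{1/2}(\Gamma)}$ to guarantee that the coefficient $c\|h\|_{H^{1/2}(\Gamma)}^{q_n}$ in (\ref{3.20}) is strictly smaller than $c_p$, I would obtain a differential inequality of the form
\begin{equation*}
\frac{d}{dt}E(t)+c_p\,E(t)\le c_0\bigl(\|h\|_{H^{1/2}(\Gamma)}^{2}+\|\partial_th\|_{H^{-1/2}(\Gamma)}^{2}+\|h\|_{H^{1/2}(\Gamma)}^{4}\bigr),
\end{equation*}
where $E(t)=\|u(t)\|_{L^2}^2+\|\tilde b(t)\|_{L^2}^2$.

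Next I would integrate this by the Gronwall/variation-of-constants method. The homogeneous part decays like $e^{-c_p t}E(0)$, so starting from a bounded set $B$ with $E(0)\le\operatorname{diam}(B)$ this term drops below the threshold $\tilde c\|h\|^2_{L^\infty(L^2)}$ precisely after the stated absorbing time $t_0(B)=c_p^{-1}\ln\bigl(\operatorname{diam}(B)/(\tilde c\|h\|^2_{L^\infty(L^2)})\bigr)$. For the inhomogeneous part, the normality of $h$ and $\partial_th$ (Definition \ref{de4.6}) is the crucial ingredient: the convolution $\int_0^t e^{-c_p(t-\tau)}F(\tau)\,d\tau$ of an exponentially decaying kernel against a normal forcing $F$ is bounded \emph{uniformly in time} by a geometric series, which is where the factor $e^{c_p}c_0/(e^{c_p}-1)$ in $\rho_0$ comes from. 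I would then translate back from $\tilde b$ to $b$ using $\|b\|_{L^2}\le\|\tilde b\|_{L^2}+\|h_E\|_{L^2}$ and the trace/lifting bound $\|h_E\|_{L^2}\le c\|h\|_{L^2(\Gamma)}\le c\|h\|_{L^\infty(L^2)}$ from Lemma \ref{le2.2}, absorbing this into the $\tilde c\|h\|^2_{L^\infty(L^2)}$ contribution to $\rho_0$. This yields $\|u\|_H^2+\|b\|_{L^2}^2\le\rho_0$ for all $t\ge t_0(B)$, uniformly over $\sigma\in\Sigma_0$, establishing that $B_0$ is a uniform absorbing set.

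For the higher-integrability estimate (\ref{4.2}) I would integrate (\ref{3.20}) over the unit time window $[t,t+1]$ rather than apply Gronwall. Once $E$ is known to be bounded by $\rho_0$ on $[t_0(B),\infty)$, integrating the differential inequality gives $\int_t^{t+1}(\|\nabla u\|_{L^2}^2+\|\nabla\tilde b\|_{L^2}^2)\,d\tau$ controlled by $E(t)-E(t+1)$ plus the time-integral of the forcing over one period, and normality again bounds the latter uniformly; converting $\nabla\tilde b$ back to $\nabla b$ and passing from $\|\nabla u\|_{L^2},\|\nabla b\|_{L^2}$ to the full $V$- and $H^1$-norms via Poincar\'e and the lifting bounds produces the constant $\rho_1=(c_p+1+c_\Omega)\rho_0$. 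The main obstacle, and the step I would treat most carefully, is verifying that the smallness hypothesis on $\sup_{t\ge0}\|h\|_{H^{1/2}(\Gamma)}$ genuinely forces the bad coefficient below $c_p$ so that the sign of the net linear term is dissipative; this is what distinguishes the non-autonomous attractor analysis from the finite-time existence theory, where no smallness was needed because Gronwall tolerated an exponentially growing factor. Everything else — the trace estimates, the Young-inequality splittings, and the normality bookkeeping — is routine given Lemmas \ref{le2.1}--\ref{le2.3} and the energy inequality already established in (\ref{3.20}).
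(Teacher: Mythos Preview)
Your proposal is correct and follows essentially the same route as the paper: the paper rewrites (\ref{3.20}) for the weak solution as (\ref{4.3}), uses the smallness assumption (A1) exactly as you describe to make the coefficient $c_1\|h\|_{H^{1/2}(\Gamma)}^4$ dominated by the Poincar\'e constant $c_p$ (see (\ref{4.4})), applies Gronwall to obtain the exponentially weighted bound (\ref{4.5}), and then controls the inhomogeneous convolution by the same geometric-series argument over unit intervals that you outline, yielding the factor $e^{c_p}/(e^{c_p}-1)$. The passage from $\tilde b$ back to $b$ via the lifting $h_E$ and the derivation of (\ref{4.2}) by integrating (\ref{4.3}) over $[t,t+1]$ are also carried out just as you indicate.
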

\begin{proof}
Similar to (\ref{3.20}),  for the weak solution $(u,b)$, we have
\begin{align}\label{4.3}
&\frac{d}{dt}(\|u\|_{L^2}^{2}+\|\tilde{b}\|_{L^2}^{2})+\|\nabla u\|_{L^2}^{2}+\|\nabla\tilde{b}\|_{L^2}^{2}\nonumber\\
&\leq c_1\|h\|_{H^{\frac{1}{2}}(\Gamma)}^{4}(\|u\|_{L^2}^{2}+\|\tilde{b}\|_{L^2}^{2})+c_0(\|h\|_{H^{\frac{1}{2}}(\Gamma)}^{2}
+\|\partial_th\|_{H^{-\frac{1}{2}}(\Gamma)}^{2}+\|h\|_{H^{\frac{1}{2}}(\Gamma)}^{4}),
\end{align}
where $c_1$ and $c_0$ are two positive constants depend on $\Omega$. Since $\sup_{t\geq0}\|h\|_{H^{\frac12}(\Gamma)}$ suitable small, it holds that
\begin{equation}\label{4.4}
  c_1\sup_{t\geq0}\|h\|_{H^{\frac{1}{2}}(\Gamma)}^{4}\leq c_p,
\end{equation}
with $c_p=\frac12\min\{c_u,c_b\}$, and $c_u$, $c_b$ denote the Poincare's constant of $u$ and $\tilde{b}$, respectively, namely
\begin{equation*}
\|\nabla u\|_{L^2}^{2}\geq c_u\| u\|_{L^2}^{2},\quad \|\nabla\tilde{b}\|_{L^2}^{2}\geq c_b\|\tilde{b}\|_{L^2}^{2}.
\end{equation*}
Employing Gronwall's inequality, then from (\ref{4.3}), we deduce that
\begin{align}\label{4.5}
&\|u\|_{L^2}^{2}+\|b\|_{L^2}^{2}\leq\tilde{c}\|h\|^2_{L^{\infty}((0,\infty);L^2(\Omega))}\nonumber\\
& +e^{-c_pt}\left[(\|u_0\|_{L^2}^{2}+\|b_0\|_{L^2}^{2})+c_0\int_0^te^{c_p\tau}(\|h\|_{H^{\frac{1}{2}}(\Gamma)}^{2}
+\|\partial_th\|_{H^{-\frac{1}{2}}(\Gamma)}^{2}+\|h\|_{H^{\frac{1}{2}}(\Gamma)}^{4})d\tau\right],
\end{align}
where $\tilde{c}$ depends on $\Omega$. Thus, in order to obtain $B_0$, we only need  to prove that the integrals on the right hand side of
(\ref{4.5}) are bounded if $h\in \Sigma_0$. In fact, for any $t\geq0$, there exists $n\in \mathbb{N}$ such that $n-1\leq t\leq n$, and  we further obtain
\begin{align*}
 e^{-c_pn} \int_0^ne^{c_p\tau}\|h(\tau)\|_{H^{\frac{1}{2}}(\Gamma)}^{2}d\tau &
 \leq e^{-c_pn} \sum_{i=0}^{n-1}e^{c_p(i+1)}\int_{i}^{i+1}\|h(\tau)\|_{H^{\frac{1}{2}}(\Gamma)}^{2}d\tau \\
 & \leq e^{-c_pn}e^{c_p}\|h\|^2_{L_n^2(H^{\frac12}(\Gamma))} \sum_{i=0}^{n-1}e^{c_pi}\\
 & \leq \frac{e^{c_p}}{e^{c_p}-1} \|h\|^2_{L_n^2(H^{\frac12}(\Gamma))}.
\end{align*}
By the same way, we can also show that the rest two integrals are bounded from above. Thus, we obtain $B_0$ as claimed.
Now, we denote by $t_0(B)$ the absorbtion time of the bounded set $B$ in $B_0$, and $t_0$ can be derived from the following inequality
\begin{equation*}
e^{-c_pt}(\|u_0\|_{L^2}^{2}+\|b_0\|_{L^2}^{2})\leq \tilde{c}\|h\|^2_{L^{\infty}((0,\infty);L^2(\Omega))}.
\end{equation*}
In addition, note that
\begin{align}\label{4.6}
  \|b\|^2_{H^1} &\leq \|b\|^2_{L^2}+ \|\nabla \tilde{b}\|^2_{L^2}+ \|\nabla h_E\|^2_{L^2}\nonumber\\
  & \leq c_{\Omega} \|h\|^2_{H^{\frac12}(\Gamma)}+\|\nabla \tilde{b}\|^2_{L^2}.
\end{align}
Integrating  (\ref{4.3}) over $[t,t+1]$ with $t$ sufficiently large ($t\geq t_0(B)$), then we have (\ref{4.2}). Thus, we have completed the
proof of Lemma \ref{le4.2}.
\end{proof}
Similarly, based on the existence of global strong solution in Theorem \ref{th1.2}, we are able to prove the existence of absorbing sets bounded in more regular spaces $V\times H^1$.
\begin{Lemma}\label{le4.3}
Let all assumptions of Theorem \ref{th1.2} and (A2) be in force. Then the system (\ref{1.2})-(\ref{1.3}) admits a uniform
(w.r.t. $\sigma\in \Sigma_1$) absorbing set $B_2\in V\times H^1$:
\begin{equation*}
B_2=\{(u,b)\in V\times H^1:~  \|u\|_{H^1}^{2}+\|b\|_{H^1}^{2}\leq\rho_2\},
\end{equation*}
and a uniform (w.r.t. $\sigma\in \Sigma_1$) absorbing time for the bounded set $B$ in $B_2$ given by $t_2(B)=t_0(B)+1$.
Moreover, there holds
\begin{equation}\label{4.7}
\int_{t}^{t+1}\|u(\tau)\|_{H^2}^{2}d\tau+\int_{t}^{t+1}\|b(\tau)\|_{H^2}^{2}d\tau\leq\rho_3,
\end{equation}
where $\rho_2$ and $\rho_3$ depend on $\Omega$, $\|h\|_{L_n^2(\mathbb{R}_+;H^{\frac32}(\Gamma))}$ and
$\|\partial_th\|_{L_n^2(\mathbb{R}_+;H^{-\frac12}(\Gamma))}$.
\end{Lemma}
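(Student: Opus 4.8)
The plan is to upgrade the lower-order absorbing estimate of Lemma \ref{le4.2} to the higher-order space $V\times H^1$ by applying a uniform Gronwall argument to the differential inequality (\ref{3.28}) already derived in the proof of Theorem \ref{th1.2}. Writing $\hat{b}=b-h_p$ and setting $y(t)=\|\nabla u\|_{L^2}^{2}+\|\nabla\hat{b}\|_{L^2}^{2}$, that inequality has the form
\[
\frac{dy}{dt}+\|Su\|_{L^2}^{2}+\|\Delta\hat{b}\|_{L^2}^{2}\leq K\,y+F,
\]
with $K=c(\|u\|_{L^2}^{2}+\|b\|_{L^2}^{2})(\|\nabla u\|_{L^2}^{2}+\|\nabla\hat{b}\|_{L^2}^{2})$ and $F=c(\|u\|_{L^2}^{2}+\|b\|_{L^2}^{2})\|h\|_{H^{1/2}(\Gamma)}^{4}+c\|h\|_{H^{3/2}(\Gamma)}^{2}$. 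The key observation is that the three quantities $\int_{t}^{t+1}K\,d\tau$, $\int_{t}^{t+1}F\,d\tau$, and $\int_{t}^{t+1}y\,d\tau$ can all be bounded uniformly in $\sigma\in\Sigma_1$ and independently of $t\geq t_0(B)$, which is precisely the input required by the uniform Gronwall lemma.

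First I would invoke Lemma \ref{le4.2}: for $t\geq t_0(B)$ the orbit lies in the absorbing ball $B_0$, so $\|u\|_{L^2}^{2}+\|b\|_{L^2}^{2}\leq\rho_0$, and moreover (\ref{4.2}) gives $\int_{t}^{t+1}(\|u\|_V^{2}+\|b\|_{H^1}^{2})\,d\tau\leq\rho_1$. Combining the pointwise $L^2$ bound with this time-integrated gradient bound yields $\int_{t}^{t+1}K\,d\tau\leq c\rho_0\rho_1$ and $\int_{t}^{t+1}y\,d\tau\leq\rho_1$, while the normality assumptions (A2), together with the smallness of $\sup_t\|h\|_{H^{1/2}(\Gamma)}$ (so that $\int_t^{t+1}\|h\|_{H^{1/2}(\Gamma)}^4\,d\tau\leq(\sup_t\|h\|_{H^{1/2}(\Gamma)}^2)\int_t^{t+1}\|h\|_{H^{3/2}(\Gamma)}^2\,d\tau$), make $\int_{t}^{t+1}F\,d\tau$ finite and uniform in the symbol. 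The uniform Gronwall lemma then produces $y(t)\leq\rho_2'$ for all $t\geq t_0(B)+1=t_2(B)$, uniformly in $\sigma\in\Sigma_1$. Converting back from $\hat{b}$ to $b$ via the $H^1$ bound on $h_p$ from Lemma \ref{le2.3}(2) yields the claimed absorbing set $B_2$ with radius $\rho_2$.

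For the additional estimate (\ref{4.7}) I would integrate the differential inequality over $[t,t+1]$ once the pointwise bound $y(t)\leq\rho_2'$ is in hand. The left-hand side contributes $\int_{t}^{t+1}(\|Su\|_{L^2}^{2}+\|\Delta\hat{b}\|_{L^2}^{2})\,d\tau$, which by Lemma \ref{le2.1} and the equivalence $\|\hat{b}\|_{H^2}\approx\|\Delta\hat{b}\|_{L^2}$ dominates $\int_{t}^{t+1}(\|u\|_{H^2}^{2}+\|\hat{b}\|_{H^2}^{2})\,d\tau$; the right-hand side is bounded by $\int_{t}^{t+1}(Ky+F)\,d\tau$, which is uniform in $\sigma$ since $y$ is now pointwise bounded and $\int_t^{t+1}K$, $\int_t^{t+1}F$ are already controlled. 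Finally, adding $\int_{t}^{t+1}\|h_p\|_{H^2}^{2}\,d\tau$, controlled by (\ref{2.4}), converts the $\hat{b}$-estimate into the desired $b$-estimate, giving (\ref{4.7}) with $\rho_3$ depending only on $\Omega$ and the stated norms of $h$.

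The hard part will be verifying that every constant produced along the way is genuinely independent of the symbol $\sigma\in\Sigma_1$. Since the coefficient $K$ is quadratic in the energy, one cannot control it over $[0,\infty)$ by a single application of classical Gronwall, and it is exactly the unit-interval (normal function) structure of assumption (A2) that makes the uniform Gronwall mechanism work. The smallness of $\sup_t\|h\|_{H^{1/2}(\Gamma)}$ enters through Lemma \ref{le4.2} to guarantee the Poincar\'e-type dissipation with rate $c_p$, which is what forces the orbit into $B_0$ and thereby supplies the uniform pointwise $L^2$ control underlying the whole argument.
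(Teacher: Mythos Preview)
Your proposal is correct and follows essentially the same route as the paper: both apply the uniform Gronwall lemma to the differential inequality (\ref{3.28}), feeding in the lower-order bounds from Lemma \ref{le4.2} to control $\int_t^{t+1}K$, $\int_t^{t+1}F$, and $\int_t^{t+1}y$, then use Lemma \ref{le2.3} to pass from $\hat b$ back to $b$, and finally integrate (\ref{3.28}) over $[t,t+1]$ to obtain (\ref{4.7}). Your write-up is in fact more careful than the paper's terse argument, particularly in explaining how the normality assumption (A2) and the smallness of $\sup_t\|h\|_{H^{1/2}(\Gamma)}$ together ensure uniform control of $\int_t^{t+1}\|h\|_{H^{1/2}(\Gamma)}^4\,d\tau$, which the paper leaves implicit.
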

\begin{proof}
Taking into account (\ref{3.28}), applying the uniform Gronwall's inequality (cf. Chap.3 Sec.1.1.3 in \cite{bo30}), then for all
$\varepsilon,t\geq0$:
\begin{align*}
&\|\nabla u(t+\varepsilon)\|_{L^2}^2+ \|\nabla \hat{b}(t+\varepsilon)\|_{L^2}^2
 \leq \left(\frac{1}{\varepsilon}\int_t^{t+\varepsilon}(\|\nabla u(\tau)\|_{L^2}^2+\|\nabla \hat{b}(\tau)\|_{L^2}^2)d\tau\right.\\
&\left.+c\int_t^{t+\varepsilon}[(\|u(\tau)\|_{L^2}^2+\|b(\tau)\|_{L^2}^2)\|h(\tau)\|^4_{H^{\frac12}(\Gamma)}
+\|h(\tau)\|^2_{H^{\frac32}(\Gamma)}] d\tau \right)\times exp\left(\int_{t}^{t+\varepsilon}K(\tau)d\tau\right).
\end{align*}
In virtue of Lemma \ref{le2.3} and by choosing $\varepsilon=1$, then we obtain the existence of the absorbing set $B_2$.

Finally, by  Lemma \ref{le2.3}  and integrating (\ref{3.28}) from $t$ to $t+1$ with $t\geq t_2(B)$, then we have (\ref{4.7}). This, completes the proof
of Lemma \ref{le4.3}.
\end{proof}
\subsection{Existence of a uniform attractor}
In this section, we proceed to prove the existence of a uniform attractor for (\ref{1.2})-(\ref{1.3}).
\begin{proof}[Proof of Theorem \ref{th1.3}]
Recalling Theorem \ref{th4.1}, in order to prove Theorem \ref{th1.3}, we only need to prove $\omega$-limit compactness and weak continuity of a family of
process  $\{U_{h}(t,\tau)\}$. For simplicity, we divide the proof into several steps.

\textbf{Step 1.} $\omega$-limit compactness of $\{U_{h}(t,\tau)\}$.
Taking into account Lemma \ref{le4.1}, which provides a straightforward way to prove $\omega$-limit compactness of the process. First, by Lemma
\ref{le4.2} and Lemma \ref{le4.3}, the condition ($i_1$) is verified clearly. Next, we aim to check ($i_2$). Let $V^n$ be a subspace of $V$
for the velocity given by Proposition  \ref{po3.1}, $D^m$ be a space spanned by the first $m$ eigenfunctions of the Laplace's problem with homogeneous
Dirichlet boundary conditions in $\Omega$. Let $\{\lambda_n\}$ and $\{\mu_m\}$ be the eigenvalues of Stokes's problem and Laplace's problem in $\Omega$,
respectively. It is well known that $0<\lambda_1<\lambda_2<\cdots\nearrow\infty$ and $0<\mu_1<\mu_2<\cdots\nearrow\infty$ are monotone increasing
sequences. In what follows, we use $P_n$ and $Q_m$ as projections on $V^n$ and $D^m$, respectively. Moreover, consider the following lifted approximate
problems
\begin{equation}\label{4.12}
\begin{cases}
\partial_t u_m-\Delta u_m+u_m\cdot\nabla u_m-b_m\cdot\nabla b_m+\nabla p_m=0 &\text{in}~ Q_T,\\
\partial_t \tilde{b}_m-\Delta\tilde{b}_m+u_m\cdot\nabla (\tilde{b}_m+h_E)-(\tilde{b}_m+h_E)\cdot\nabla u_m+\partial_t h_E=0 &\text{in}~ Q_T,\\
\textrm{div}u_m=0&\text{in}~ Q_T,\\
u_m(0)=P_mu_{0},~\tilde{b}_m(x,0)=Q_m(b_0-h_E(0))  &\text{in}~\Omega,\\
u_m(x,t)=\tilde{b}_m(x,t)=0 &\text{on}~\Gamma_T,
\end{cases}
\end{equation}
or
\begin{equation}\label{4.13}
\begin{cases}
\partial_t u_m-\Delta u_m+u_m\cdot\nabla u_m-b_m\cdot\nabla b_m+\nabla p_m=0 &\text{in}~ Q_T,\\
\partial_t \hat{b}_m-\Delta\hat{b}_m+u_m\cdot\nabla (\hat{b}_m+h_p)-(\hat{b}_m+h_p)\cdot\nabla u_m+\partial_t h_p=0 &\text{in}~ Q_T,\\
\textrm{div}u_m=0&\text{in}~ Q_T,\\
u_m(0)=P_mu_{0},~\hat{b}_m(x,0)=Q_m(b_0-h_p(0))  &\text{in}~\Omega,\\
u_m(x,t)=\hat{b}_m(x,t)=0 &\text{on}~\Gamma_T.
\end{cases}
\end{equation}
Analogous to the proof of Theorem \ref{th1.1}-\ref{th1.2}, by (\ref{4.12})-(\ref{4.13}), we can obtain the existence of weak and strong solutions
$(u,b)$ to (\ref{1.2})--(\ref{1.3}). At this stage, we define $u_1:=P_nu$,
$b_1:=Q_m \tilde{b}$, $u_2:=u-u_1$ and $b_2:=\tilde{b}-b_1$ with $\tilde{b}=b-h_E$.

Multiplying (\ref{1.3})$_1$ and (\ref{1.3})$_2$ with $-S u_2=\Delta u_2-\nabla p_2$ and $-\Delta b_2$, respectively, we can see that
\begin{align}\label{4.8}
\frac{1}{2}&\frac{d}{dt}(\|\nabla u_2\|_{L^2}^{2}+\|\nabla b_2\|_{L^2}^{2})+\|S u_2\|_{L^2}^{2}+\|\Delta b_2\|_{L^2}^{2} \nonumber \\
=&\int_{\Omega} (u\cdot\nabla u)\cdot S u_2dx-\int_{\Omega} (b\cdot\nabla b)\cdot S u_2dx\nonumber \\
 & +\int_{\Omega} (u\cdot\nabla b)\cdot\Delta b_2dx-\int_{\Omega} (b\cdot\nabla u)\cdot\Delta b_2dx
 +\int_{\Omega}\partial_th_E\cdot\Delta b_2dx\nonumber \\
:=&R_1+R_2+R_3+R_4+R_5,
\end{align}
where in the left hand side of (\ref{4.8}), we have taken into account
\begin{equation*}
\int_{\Omega}Su_1\cdot Su_2 dx=0\quad \text{and}\quad \int_{\Omega}\nabla (p-p_1-p_2)\cdot Su_2 dx=0,
\end{equation*}
with $p_1,$ $p_2$ are the pressure terms corresponding to $u_1$, $u_2$ respectively, satisfying
\begin{align*}
Su_1&=-\Delta u_1+\nabla p_1=g_i(t)\sum_{i=1}^n\lambda_i\xi_i(x),  \\
Su_2&=-\Delta u_2+\nabla p_2=g_i(t)\sum_{i=n+1}^{\infty}\lambda_i\xi_i(x).
\end{align*}
From Lemma \ref{le4.2}, Lemma \ref{le4.3} and Lemma \ref{le2.4}, it follows that
\begin{align*}
R_1= &\int_{\Omega} (u_1\cdot\nabla u)\cdot S u_2dx+\int_{\Omega} (u_2\cdot\nabla u)\cdot S u_2dx  \\
\leq& \|u_1\|_{L^{\infty}}\|\nabla u\|_{L^2}\|S u_2\|_{L^2}+\|u_2\|_{L^{\infty}}\|\nabla u\|_{L^2}\|S u_2\|_{L^2}\\
\leq& c \|\nabla u_1\|_{L^2}\left(1+\ln\frac{\|\Delta u_1\|^2_{L^2}}{\|\nabla u_1\|^2_{L^2}}\right)\|\nabla u\|_{L^2}\|S u_2\|_{L^2}
 +c\| u_2\|_{L^2}^{\frac12}\|S u_2\|_{L^2}^{\frac12}\|\nabla u\|_{L^2}\|S u_2\|_{L^2}\\
\leq& c\rho_2(1+\ln [(c_0+1)\lambda_{n+1}])^{\frac12}\|S u_2\|_{L^2}+c\rho_0^{\frac12}\rho_2^{\frac12}\|S u_2\|_{L^2}^{\frac32}\\
\leq& c(\rho_2^2(1+\ln [(c_0+1)\lambda_{n+1}])+\rho_0^2\rho_2^2)+\frac18\|S u_2\|_{L^2}^2,
\end{align*}
where in the second inequality, we have used  the equivalent norms $\|v\|_{H^2}\approx \|\Delta v\|_{L^2}$ in $H^1_0(\Omega)\cap H^2(\Omega)$,
and the fact
\begin{equation}\label{4.08}
\|\Delta u_1\|_{L^2}^2\leq (c_0+1)\lambda_{n+1}\|\nabla u_1\|_{L^2}^2,
\end{equation}
where $c_0$ only depends on $\Omega$ and the spatial dimension. In fact, in view of (\ref{3.01}) and (\ref{3.014}), we can see that
\begin{equation*}
\|\Delta u_1\|_{L^2}^2=\sum_{i=1}^n|g_i(t)|^2\|\lambda_i\xi_i+\nabla p_i\|_{L^2}^2
= \sum_{i=1}^n|g_i(t)|^2(\lambda_i^2\|\xi_i\|_{L^2}^2+\|\nabla p_i\|_{L^2}^2),
\end{equation*}
and
\begin{align*}
\|\nabla u_1\|_{L^2}^2& =\sum_{i=1}^n|g_i(t)|^2\int_{\Omega}\nabla\xi_i\cdot\nabla\xi_idx\\
&=\sum_{i=1}^n|g_i(t)|^2\int_{\Omega}-\Delta\xi_i\cdot\xi_idx
= \sum_{i=1}^n|g_i(t)|^2\lambda_i\|\xi_i\|_{L^2}^2.
\end{align*}
Thus, combining these two conclusions and Lemma \ref{le2.1}, which easily yields (\ref{4.08}).

Furthermore, note that $\|\Delta b_1\|_{L^2}^2\leq \mu_{m+1}\|\nabla b_1\|_{L^2}^2$, by H\"{o}lder, Young and Sobolev's inequalities, we  obtain
\begin{align*}
R_2&\leq \|b\|_{L^4}\|\nabla b\|_{L^4}\|S u_2\|_{L^2}\\
&\leq c\|b\|_{L^2}^{\frac{1}{2}}\|b\|_{H^1}^{\frac{1}{2}}
\|\nabla b\|_{L^2}^{\frac{1}{2}}\|\nabla b\|_{H^1}^{\frac{1}{2}}\|S u_2\|_{L^2}\\
&\leq c\rho_{0}^{\frac{1}{4}}\rho_{2}^{\frac{1}{2}}(\|\Delta\tilde{b}\|_{L^2}^{\frac{1}{2}}+\|\nabla h_E\|_{H^1}^{\frac{1}{2}})\|S u_2\|_{L^2}\\
&\leq c\rho_{0}^{\frac{1}{4}}\rho_{2}^{\frac{1}{2}}(\|\Delta b_1\|_{L^2}^{\frac{1}{2}}+
\|\Delta b_2\|_{L^2}^{\frac{1}{2}}+\|h\|_{H^{\frac{3}{2}}(\Gamma)}^{\frac{1}{2}})\|S u_2\|_{L^2}\\
&\leq c\rho_{0}^{\frac{1}{4}}\rho_{2}^{\frac{1}{2}}(\mu_{m+1}^{\frac{1}{4}}\|\nabla b_1\|_{L^2}^{\frac{1}{2}}
+\|\Delta b_2\|_{L^2}^{\frac{1}{2}}+\|h\|_{H^{\frac{3}{2}}(\Gamma)}^{\frac{1}{2}})\|S u_2\|_{L^2}\\
&\leq c(\rho_0,\rho_2)\mu_{m+1}^{\frac{1}{2}}+c\rho_0\rho_2^2+c\|h\|_{H^{\frac{3}{2}}(\Gamma)}^2
+\frac18\|S u_2\|_{L^2}+\frac18\|\Delta b_2\|_{L^2}.
\end{align*}
Similarly, we further obtain
\begin{align*}
R_3&\leq \|u\|_{L^4}\|\nabla b\|_{L^4}\|\Delta b_2\|_{L^2}\\
&\leq c\|u\|_{L^2}^{\frac{1}{2}}\|\nabla u\|_{L^2}^{\frac{1}{2}}
\|\nabla b\|_{L^2}^{\frac{1}{2}}\|\nabla b\|_{H^1}^{\frac{1}{2}}\|\Delta b_2\|_{L^2}\\
&\leq c\rho_{0}^{\frac{1}{4}}\rho_{2}^{\frac{1}{2}}(\|\Delta\tilde{b}\|_{L^2}^{\frac{1}{2}}+\|\nabla h_E\|_{H^1}^{\frac{1}{2}})\|\Delta b_2\|_{L^2}\\
&\leq c\rho_{0}^{\frac{1}{4}}\rho_{2}^{\frac{1}{2}}(\|\Delta b_1\|_{L^2}^{\frac{1}{2}}
+\|\Delta b_2\|_{L^2}^{\frac{1}{2}}+\|h\|_{H^{\frac{3}{2}}(\Gamma)}^{\frac{1}{2}})\|\Delta b_2\|_{L^2}\\
&\leq c\rho_{0}^{\frac{1}{4}}\rho_{2}^{\frac{1}{2}}(\mu_{m+1}^{\frac{1}{4}}\|\nabla b_1\|_{L^2}^{\frac{1}{2}}
+\|\Delta b_2\|_{L^2}^{\frac{1}{2}}+\|h\|_{H^{\frac{3}{2}}(\Gamma)}^{\frac{1}{2}})\|\Delta b_2\|_{L^2}\\
&\leq c(\rho_0,\rho_2)\mu_{m+1}^{\frac{1}{2}}+c\rho_0\rho_2^2+c\|h\|_{H^{\frac{3}{2}}(\Gamma)}^2+\frac18\|\Delta b_2\|_{L^2},
\end{align*}
and
\begin{align*}
R_4&\leq \|b\|_{L^4}\|\nabla u\|_{L^4}\|\Delta b_2\|_{L^2}\\
&\leq c\|b\|_{L^2}^{\frac{1}{2}}\|b\|_{H^1}^{\frac{1}{2}}
\|\nabla u\|_{L^2}^{\frac{1}{2}}\|\Delta u\|_{L^2}^{\frac{1}{2}}\|\Delta b_2\|_{L^2}\\
&\leq c\rho_{0}^{\frac{1}{4}}\rho_{2}^{\frac{1}{2}}(\|\Delta u_1\|_{L^2}^{\frac{1}{2}}+\|S u_2\|_{L^2}^{\frac{1}{2}})\|\Delta b_2\|_{L^2}\\
&\leq c\rho_{0}^{\frac{1}{4}}\rho_{2}^{\frac{1}{2}}(\lambda_{n+1}^{\frac{1}{4}}\|\nabla u_1\|_{L^2}^{\frac{1}{2}}
+\|S u_2\|_{L^2}^{\frac{1}{2}})\|\Delta b_2\|_{L^2}\\
&\leq c\rho_0^{\frac12}\rho_1^{\frac32}\lambda_{n+1}^{\frac12}+c\rho_0\rho_2^2
+\frac18\|S u_2\|_{L^2}+\frac18\|\Delta b_2\|_{L^2}.
\end{align*}
Finally, for the term $R_5$, it is obvious that
\begin{equation*}
R_5\leq \|h_E\|_{L^2}\|\Delta b_2\|_{L^2}\leq c\|\partial_th\|^2_{H^{-\frac12}(\Gamma)}+\frac18\|\Delta b_2\|_{L^2}^2.
\end{equation*}
Putting these estimates into (\ref{4.8}) and taking into account Lemma \ref{le2.1}, we conclude that
\begin{align*}
&\frac{d}{dt}(\|\nabla u_2\|_{L^2}^{2}+\|\nabla b_2\|_{L^2}^{2})+\|\Delta u_2\|_{L^2}^{2}+\|\Delta b_2\|_{L^2}^{2} \\
&\leq c(\rho_0,\rho_2)(1+\ln[(c_0+1)\lambda_{n+1}]+\lambda_{n+1}^{\frac12}+\mu_{m+1}^{\frac12})+c\|h\|^2_{H^{\frac32}(\Gamma)}.
\end{align*}
Note that $\|\Delta u_2\|_{L^2}^2\geq \lambda_{n+1}\|\nabla u_2\|_{L^2}^2 $, $\|\Delta b_2\|_{L^2}^2\geq \mu_{m+1}\|\nabla b_2\|_{L^2}^2 $. Thus,
by Gronwall's inequality, the previous inequality implies that
\begin{align}\label{4.9}
\|\nabla u_2\|_{L^2}^{2}+\|\nabla b_2\|_{L^2}^{2}\leq &(\|\nabla u_2(t_0)\|_{L^2}^{2}+\|\nabla b_2(t_0)\|_{L^2}^{2})e^{-\gamma(t-t_0)} \nonumber \\
 & +\frac{c(\rho_0,\rho_2)}{\gamma}(1+\ln[(c_0+1)\lambda_{n+1}]+\lambda_{n+1}^{\frac12}+\mu_{m+1}^{\frac12})
 +\int_{t_0}^te^{-\gamma(t-s)}\|h\|^2_{H^{\frac32}(\Gamma)}ds,
\end{align}
where $\gamma=\min\{\lambda_{n+1},\mu_{m+1}\}$. Now we choose $n$ and $m$ sufficiently large such that $\lambda_{n+1}\approx \mu_{m+1}$,
then all terms on the right hand side of (\ref{4.9}) can be arbitrarily small, that is ($i_2$). Thus, we have proved the $\omega$-limit compactness
of the process.

\textbf{Step 2.} Weak continuity of the process $\{U_h(t,\tau)\}$. Now, we focus our attention on proving weak continuity of the process $\{U_h(t,\tau)\}$
with respect to initial data and boundary data $h\in \Sigma_1$.

Let $\{(u_{0n},b_{0n})\}\subset V\times H^1$, $(u_{0n},b_{0n})\longrightarrow(u_{0},b_{0})$ weakly in $V\times H^1$ and $\{h_n\}\subset\Sigma_1$,
$h_n\longrightarrow h$ weakly in $H^{\frac32}(\Gamma)$ be weakly convergent sequences of initial data and symbols. We propose to prove
$U_{h_n}(t,\tau)(u_{0n},b_{0n})\longrightarrow U_{h}(t,\tau)(u_{0},b_{0})$ weakly in $V\times H^1$. For this aim, we set
$(u_n(t),b_n(t))=U_{h_n}(t,\tau)(u_{0n},$ $b_{0n})$. Taking into account Lemma \ref{le4.3}, we infer that $\{(u_n(t),\tilde{b}_n(t))\}$ is bounded in
$L^{\infty}([\tau,\infty);V\times H^1)$ and in $L_{loc}^2([\tau,\infty);H^2\times H^2)$. Moreover, we can also obtain
$(\partial_tu_n,\partial_t\tilde{b}_n)$ is bounded in $L_{loc}^2([\tau,\infty);H\times L^2)$.

Next, we proceed to prove the  pre-compactness of the sequence $\{(u_n(t),\tilde{b}_n(t))\}$ in
$L^2_{loc}([\tau,\infty);V\times H^1)$. First, it is clearly that for all $v\in L^2$ and a.e. $t\in [\tau,T]$
\begin{align}\label{4.10}
\int_{\Omega}(u_n(t+\delta)-u_n(t))\cdot vdx &=\int_{t}^{t+\delta} \int_{\Omega}\partial_tu_n(s)\cdot vdxds\nonumber \\
 & \leq \delta^{\frac12}\|v\|_{L^2}\|\partial_tu_n\|_{L^2_{loc}([\tau,\infty);L^2)}\nonumber \\
  & \leq c\delta^{\frac12}\|v\|_{L^2},
\end{align}
where $\delta>0$ be suitable small constant.

Let $v=-\Delta(u_n(t+\delta)-u_n(t))$ in (\ref{4.10}), note that $\{u_n\}$ is bounded in $L^2([\tau,T-\delta];H^2)$, by integration by parts,
it holds that for all $T>\tau$
\begin{align}\label{4.11}
\int_{\tau}^{T-\delta}\|\nabla(u_n(t+a)-u_n(t))\|_{L^2}^2dt &\leq c \delta^{\frac12}\int_{\tau}^{T-\delta}
\|\Delta(u_n(t+\delta)-u_n(t))\|_{L^2}dt \nonumber \\
 &\leq c\delta^{\frac12}\left(\int_{\tau}^{T-\delta}\|\Delta(u_n(t+\delta)-u_n(t))\|^2_{L^2}dt\right)^{\frac12}\nonumber \\
 &\leq c(T)\delta^{\frac12}.
\end{align}
This implies that $\{u_n\}$ is pre-compact in $L^2([\tau,T-\delta];V)$ for all $T>\tau$.

Analogously, we can also obtain $\{\tilde{b}_n\}$ is pre-compact in $L^2([\tau,T-\delta];H_0^1)$, which together with the boundary condition easily yields
that $\{\tilde{b}_n\}$ is pre-compact in $L^2([\tau,T-\delta];H^1)$ for all $T>\tau$.

From the conclusion above, now, we can extract a subsequence of $\{(u_n,b_n)\}$, that converges to $(u,b)$ weakly in
$L_{loc}^2([\tau,\infty);H^2\times H^2)$, strongly in $L_{loc}^2([\tau,\infty);V\times H^1)$ and weak-star in
$L_{loc}^{\infty}([\tau,\infty);V\times H^1)$. Similar to Section \ref{se3}, we claim that $(u,b)$ indeed solves (\ref{1.2})-(\ref{1.3}).
Hence, for any regular pair $(v,w)$, we have for a.e. $t\geq \tau$
\begin{align*}
(\nabla u_n(t),v)&:=\int_{\Omega}\nabla u_n(t)\cdot vdx\longrightarrow \int_{\Omega}\nabla u(t)\cdot vdx,\\
(\nabla b_n(t),w)&:=\int_{\Omega}\nabla b_n(t)\cdot wdx\longrightarrow \int_{\Omega}\nabla b(t)\cdot wdx.
\end{align*}
Moreover, taking into account (\ref{4.10})-(\ref{4.11}), we can see that $(\nabla u_n(t),v)$ and $(\nabla b_n(t),w)$ are equibounded and equicontinuous
functions of $t$. This, together with the fact that the lifting problem (\ref{2.1}) is weakly continuous with respect to the boundary data, implies that
the weak continuity of the solution process.

Combining the conclusions above and Theorem \ref{th4.1}, one can deduce  the existence of a uniform attractor. Thus, we have completed the proof of
Theorem \ref{th1.3}.
\end{proof}
\bigbreak
\noindent{Acknowledgements.} The first and second author was supported by the National Natural Science Foundation of China (No.11726023, 11531010). The third author was supported by the  Postdoctoral Science Foundation of China (No. 2019TQ0006) and the Boya Postdoctoral Fellowship of Peking University.

\end{document}